\numberwithin{equation}{section}
\tikzset{
  big arrow 3/.style={
    decoration={markings,mark=at position 0.7 with {\arrow[scale=0.7,#1]{>}}},
    postaction={decorate}},
  big arrow/.default=black}
\def\centerarc[#1](#2)(#3:#4:#5){\draw[#1] ($(#2)+({#5*cos(#3)},{#5*sin(#3)})$) arc (#3:#4:#5);}
\tikzset{
  big arrow 3/.style={
    decoration={markings,mark=at position 0.7 with {\arrow[scale=0.7,#1]{>}}},
    postaction={decorate}},
  big arrow/.default=black}
\def\centerarc[#1](#2)(#3:#4:#5){\draw[#1] ($(#2)+({#5*cos(#3)},{#5*sin(#3)})$) arc (#3:#4:#5);}
\newtheorem{theorem}{Theorem}[section]
\newtheorem{lemma}[theorem]{Lemma}
\newtheorem{proposition}[theorem]{Proposition}
\newtheorem{remark}[theorem]{Remark}
\newtheorem{definition}[theorem]{Definition}
\newcommand{\one}{\mathds{1}}
\newcommand{\mc}[1]{{\mathcal #1}}
\newcommand{\bb}[1]{{\mathbb #1}}
\newcommand{\ignore}[1]{}
\keywords{Reinforced random walk, ant random walk, directed edges, random walk on graphs}
\begin{document}

\title{Stochastic processes with competing reinforcements}

\author[D. Erhard]{Dirk Erhard}
\address{UFBA\\
 Instituto de Matem\'atica, Campus de Ondina, Av. Adhemar de Barros, S/N. CEP 40170-110\\
Salvador, Brazil}
\curraddr{}
\email{erharddirk@gmail.com}
\thanks{}

\author[G. Reis]{Guilherme Reis}
\address{TUM\\
 Technical University of Munich, Faculty of Mathematics, Boltzmannstrasse 3, 85748 Garching bei München, Germany}
\curraddr{}
\email{guilherme.dwg@gmail.com, guilherme.reis@tum.de }
\thanks{}

\subjclass[2010]{60K35, 60K37, 60G50}

\begin{abstract}

We introduce a simple but powerful strategy to study processes driven by two or more reinforcement mechanisms in competition. We apply our method to two types of models: to non conservative zero range processes on finite graphs, and to multi-particle random walks with positive and negative reinforcement on the edges. The results hold for a broad class of reinforcement functions, including those with superlinear growth.  Our strategy consists in a comparison of the original processes with suitable reference models. To implement the comparison we estimate a Radon-Nikodym derivative on a carefully chosen set of trajectories.  Our results describe the almost sure long time behaviour of the processes. We also prove a phase transition depending on the strength of the reinforcement functions.
  
\end{abstract}

\maketitle


\allowdisplaybreaks

\section{Introduction}

\subsubsection*{Description of the models.}

In this work we consider two stochastic processes. Our first model is a \emph{Non Conservative Zero Range process} (NCZR) on a finite graph $G$.  This process starts with a finite number of walkers and is described by a field  $(\eta(n))_{n\geq 0}=(\eta_v(n):v\in G)_{n\geq 0}$ of natural numbers that represent the quantity of particles at each vertex $v\in G$ at a given discrete time $n$. Given three functions $W_1, W_2, W_3:\bb N\to\bb R_{+}$ the dynamics of $\eta$ at time $n+1$ evolves as follows: 
\begin{itemize}
	\item A particle is created at site $v$ with a probability proportional to $W_1(\eta_v(n)).$
	\item A particle is annihilated at site $v$ with a probability proportional \\ to $W_2(\eta_v(n))$.
	\item A particle jumps from $v$ to a neighbour $w$ with a probability proportional to $W_3(\eta_v(n))$.
\end{itemize}   

See Section~\ref{sec:NCZR} for a complete definition of the NCZR.

In the second model, called the \emph{Multi Particle Ant Random Walk} (Ant RW),
which itself is an adaptation of a similar model introduced by Erhard, Franco and Reis in~\cite{EFR2019},  we consider a finite quantity of walkers $(X_i(n)\,:\,1\leq i\leq N)_{n\geq 0}$ that interact through a dynamical environment on a finite graph $G=(V,E)$, given by a field $\{c_n(\vec{e}),\, n\geq 0\}_{\vec{e}}$ indexed by the set of oriented edges of $G$. Given  $\vec{e}=(e_-,e_+)\in E$, the quantity $c_n(\vec{e})$ is the number of jumps of the collection of walkers $(X_i)_i$ over $\vec{e}$ minus the number of jumps over the reversed edge $(e_+,e_-)$ up to time $n$. We refer to this quantity in the sequel as the crossing number of the edge $\vec{e}$. Observe that $c_n(\vec{e})$ can be either negative or positive. Assume that the walker $X_j$ is at time $n$ on the vertex $e_-$. Then it will jump to $e_+$ with a probability proportional to:
\begin{itemize}
	\item  $W_1(c_{n}(\vec{e}))$ if $c_{n}(\vec{e})\geq 0$.
	\item   $W_2(-c_{n}(\vec{e}))$ if $c_{n}(\vec{e})<0.$
\end{itemize}

Here, $W_1:\bb N\to\bb R_{+}$ and $W_2:\bb N\to\bb R_{+}$ are two given functions that modulate the behaviour of the model. We refer to Section~\ref{sec:AntRW} for a full description of that process.
\begin{center}
	\textit{Question:} For which choices of functions $W_1, W_2$ and $W_3$ do the two models above localise? 
\end{center}
To get a better feeling for what we mean by localisation we will provide a loose statement of our main results for a particular class of functions $W_1, W_2$ and $W_3$. Rigorous statements are deferred to later sections.

To that end fix $p,q,r \in \bb R$ and define the three functions alluded to above via $W_1(k)=k^p$, $W_2(k)=k^q$, and $W_3(k)=k^r.$ We introduce the following events  which we will use only in the context of the NCZR:
\begin{itemize}
	\item ${\rm Creat}$ is the event that   after some finite time $n$, there are no jumps or annihilation of particles, only creations.

	\item ${\rm Mon}$ is the event that there is a unique site $w$ such that eventually for all large $n$ the only transformation in the system takes the form $\eta_w(n+1)=\eta_w(n)+1$ (monopoly).
\end{itemize}

Note that if ${\rm Mon}$ occurs then the system is indeed completely localised in the sense that there exists a time  $m$ such that after $m$ for all sites $v\neq w\in G$ the number of particles at $v$ does not change. If ${\rm Creat}$ occurs after some time $m$ particles are only created, never annihilated and do not swap sites.

\begin{theorem}\label{thm:NCZR} One has the following phase transition depending on $p,q,r\in \bb R:$
	\begin{enumerate}
		\item if $p\leq 1+\max\{q,r\}$ then $\bb P({\rm Creat})=0,$
		\item if $p>1+\max\{q,r,0\}$ then $\bb P({\rm Creat})=1,$ and $\bb P({\rm Mon})=1$.
	\end{enumerate}
\end{theorem}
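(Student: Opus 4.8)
\medskip
\noindent\emph{Proof strategy.}\ The plan is to compare the NCZR with the \emph{pure-creation reference}, i.e.\ the process obtained by suppressing all annihilation and jump moves. This reference is a generalised P\'olya urn on the vertices of $G$ with weight sequence $W_{1}(k)=k^{p}$, and for $p>1$ it is classical that it localises to a single vertex almost surely. Write $A(\eta)=\sum_{v}W_{1}(\eta_{v})$, $B(\eta)=\sum_{v}W_{2}(\eta_{v})$, $C(\eta)=\sum_{v}\deg(v)W_{3}(\eta_{v})$ and $Z=A+B+C$, so that the $\mathcal F_{n}$-conditional probability of a non-creation move at step $n$ is $(B+C)(\eta(n))/Z(\eta(n))$; set also $s=\max\{q,r\}$ and $\gamma=\max\{q,r,0\}$. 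Two elementary inputs will be used throughout. First, along any trajectory performing only creations after some time, the total mass $T(n)=\sum_{v}\eta_{v}(n)$ increases by one at each step, so $T(n)$ and $\max_{v}\eta_{v}(n)$ are of order $n$; bounding $A(\eta)$ above by $|V|\max_{v}\eta_{v}^{\,p}$ when $p\ge0$ and by $|V|\min_{v}\eta_{v}^{\,p}$ when $p<0$ then gives, up to graph-dependent constants,
\[
\frac{(B+C)(\eta(n))}{A(\eta(n))}\ \gtrsim\ \nu(n)^{\,s-p},\qquad 1\le\nu(n)\lesssim n,
\]
where $\nu(n)=\max_{v}\eta_{v}(n)$ if $p\ge0$ and $\nu(n)=\min_{v}\eta_{v}(n)$ if $p<0$ (checking the sign cases of $p,q,r$ is routine). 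Second, on the set of pure-creation trajectories the law of the NCZR is absolutely continuous with respect to that of the urn, with Radon--Nikodym derivative
\[
D\ =\ \prod_{n\ge0}\frac{A(\eta(n))}{Z(\eta(n))}\ =\ \prod_{n\ge0}\Bigl(1-\tfrac{(B+C)(\eta(n))}{Z(\eta(n))}\Bigr)\ \in[0,1].
\]

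For part~(1) I would argue by contradiction. Fix $m$ and let $E_{m}$ be the event that no annihilation and no jump occurs at any time $\ge m$; on $E_{m}$ the configuration evolves by pure creation from time $m$ on, so the first input applies, and since the hypothesis $p\le 1+s$ is precisely the inequality $s-p\ge-1$, one gets $\sum_{n\ge m}(B+C)(\eta(n))/Z(\eta(n))=+\infty$ pointwise on $E_{m}$. As the summands are the conditional probabilities of a non-creation at the respective steps, the conditional (L\'evy) Borel--Cantelli lemma forces infinitely many non-creation moves almost surely on $E_{m}$, contradicting the definition of $E_{m}$. Hence $\bb P(E_{m})=0$ for every $m$, and $\bb P({\rm Creat})=\bb P\bigl(\bigcup_{m}E_{m}\bigr)=0$.

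For part~(2), assume $p>1+\gamma$, so in particular $p>1$ and $p>\gamma$. First, for every configuration $\xi$ the probability that the NCZR performs only creations forever, all of them at a vertex of maximal occupation, is an infinite product whose convergence to a positive limit is equivalent to convergence of $\sum_{k}\bigl(k^{q-p}+k^{r-p}+k^{-p}\bigr)$; this holds precisely when $p>1+\gamma$, so that probability is positive, and it is bounded below uniformly over any finite set of configurations. Since such a trajectory never returns to $\{\xi:T(\xi)\le T_{0}\}$, the strong Markov property and this uniform lower bound imply that the NCZR visits $\{\xi:T(\xi)\le T_{0}\}$ only finitely often; hence $T(n)\to\infty$ almost surely. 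Next, conditioning at time $n$ and using the Radon--Nikodym identity together with $D\ge 1-\sum_{k}(B+C)(\eta(k))/Z(\eta(k))$,
\[
\bb P\bigl(\text{only creations after time }n\bigr)=\bb E_{\rm NCZR}\!\left[\bb E_{{\rm urn},\,\eta(n)}[D]\right]\ \ge\ 1-\bb E_{\rm NCZR}\!\left[\bb E_{{\rm urn},\,\eta(n)}\Bigl[\textstyle\sum_{k\ge0}\tfrac{(B+C)(\eta(k))}{Z(\eta(k))}\Bigr]\right],
\]
with $\bb E_{{\rm urn},\xi}$ the expectation for the urn started at $\xi$. Under the urn the total mass is deterministic, $T(k)=T(\xi)+k$, hence $(B+C)/Z\lesssim(T(\xi)+k)^{\gamma-p}$, and since $p>1+\gamma$ the inner expectation is $\lesssim T(\xi)^{-(p-1-\gamma)}\le 1$; as $T(n)\to\infty$, dominated convergence makes the right-hand side above tend to $1$, and since $\{\text{only creations after time }n\}$ increases to ${\rm Creat}$ we conclude $\bb P({\rm Creat})=1$. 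Finally, on ${\rm Creat}$ the trajectory after its last non-creation move is a pure-creation trajectory; the same Radon--Nikodym identity together with the almost sure localisation of the urn (only $p>1$ needed) gives $\bb P_{\rm NCZR}({\rm Creat}\setminus{\rm Mon})=0$, which combined with $\bb P({\rm Creat})=1$ yields $\bb P({\rm Mon})=1$.

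The crux is part~(2): one cannot estimate $\sum_{n}(B+C)(\eta(n))/Z(\eta(n))$ directly along the NCZR, because whether this series converges is essentially the statement one wants to prove; the point of comparing with the pure-creation urn is that there the total mass — and hence the tail of the series — is under deterministic control, and the threshold $p=1+\gamma$ appears exactly as the borderline for the summability of $\sum_{k}k^{\gamma-p}$, equivalently for the positivity of $D$. Beyond this, the fiddliest routine points should be the sign bookkeeping in the first input when some of $p,q,r$ are negative, and making the Radon--Nikodym comparison and the classical urn localisation rigorous for the full class of weights considered in the paper.
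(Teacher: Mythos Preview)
Your proposal is correct and follows essentially the same strategy as the paper: compare the NCZR with the pure-creation balls-in-bins process (the urn with feedback $k^p$) via a Radon--Nikodym derivative on the set of pure-creation trajectories, and observe that the threshold $p=1+\max\{q,r,0\}$ is exactly the summability threshold for the correction terms. The estimates you use on $(B+C)/A$ in terms of the total mass are the same as the paper's (its Equation~(3.4) and the surrounding computation).

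The execution differs slightly. For part~(1) you invoke L\'evy's conditional Borel--Cantelli lemma directly, while the paper phrases the same divergence as the Radon--Nikodym derivative tending to zero uniformly over $\mc A^+$; these are equivalent. For part~(2) the paper is more streamlined: it proves a \emph{uniform} lower bound $\tfrac{dH}{dK}\ge\delta>0$ over all starting configurations and all finite time horizons, then runs a single Borel--Cantelli argument with the stopping times $\tau_k=\inf\{n\ge\tau_{k-1}:H_{\tau_{k-1}}^{n}\notin\mc A^+\}$ to conclude $\bb P({\rm Creat})=1$. This makes your intermediate step ``$T(n)\to\infty$ a.s.'' unnecessary --- the uniformity of $\delta$ absorbs it. For ${\rm Mon}$ the paper simply reruns the same argument with $\mc A^+$ replaced by the single-vertex set $\mc B^+$, using that the urn is in $\mc B^+$ almost surely (Rubin); this sidesteps the last-time issue in your sketch, where $\tau=$ ``last non-creation time'' is not a stopping time. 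Your argument for ${\rm Mon}$ can be made rigorous along the lines you indicate (since urn-null sets pull back to NCZR-null sets on pure-creation trajectories), but the paper's route is cleaner.
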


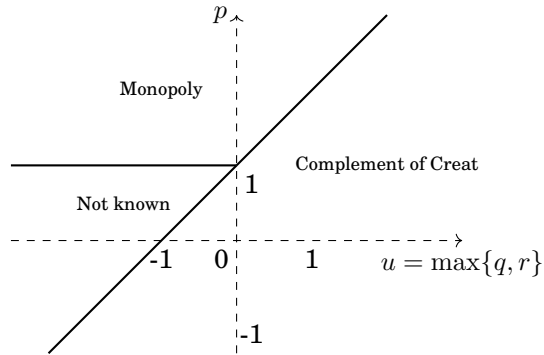
\begin{figure}[h]
\begin{tikzpicture}

\draw[->, dashed] (-3,0) -- (3,0) node[anchor=north] {$u=\max\{q,r\}$};
\draw	(-0.2,0) node[anchor=north] {0}
		(1,0) node[anchor=north] {1}
		(-1,0) node[anchor=north] {-1};
\draw	(-1,2) node{{\scriptsize Monopoly}}
		(2,1) node{{\scriptsize Complement of Creat}}
		(-1.5,0.5) node{{\scriptsize Not known}};

\draw[->,dashed] (0,-1.5) -- (0,3) node[anchor=east] {$p$};

\draw
		(0.2,1) node[anchor=north] {1}
		(0.2,-1) node[anchor=north] {-1};

\draw[thick] (-3,1) -- (0,1);
\draw[thick] (-2.5,-1.5) -- (0,1);
\draw[thick] (0,1) -- (2,3);

\end{tikzpicture}
      \label{fig:a}
\caption{Phase transition diagram.}
\end{figure}
Notice in Figure~\ref{fig:a} that for the case $\max\{q,r\}\leq 0$ we assume $p>1$ to prove that the Monopoly event happens with probability one.  Furthermore, the Monopoly event happens with probability one if $\max\{q,r\}>0 $ and $p>1+\max\{q,r\}$. In the case $p\leq 1+\max\{q,r\}$, Theorem~\ref{thm:NCZR} says that the Creation event does not happen almost surely.
Observe that Theorem~\ref{thm:NCZR} does not cover the case $\max\{q,r\}\leq 0$ and and $1+\max\{q,r\}< p\leq 1$.

Now we loosely state our main result for the Ant RW with $N\geq 1$ walkers in the special case that $W_1(k)=k^p$ and $W_2(k)=k^q.$   We start with some loose definitions to provide an idea, precise statements are postponed to Section~\ref{sec:AntRW} :
\begin{itemize}
\item We call a circuit $C$ a trapping circuit if there is a walker that eventually will spin around $C$ forever without leaving it.
\end{itemize}  

  Similarly to the previous model we introduce two events  that will be used only in the context of the Ant RW:
\begin{itemize}
	\item ${\rm Monot}$ is the event that for any directed edge $\vec{e}$ the map  $n\mapsto c_n(\vec{e})$ is monotone (incresing or decreasing) for large times. 
	\item ${\rm LocTrapp}$ is the event that ${\rm Monot}$ occurs and there are disjoint circuits $(C_i)_{1\leq i\leq k}$that are trapping circuits. Moreover, any circuit that is crossed infinitely many times is a trapping circuit.
\end{itemize}
Note that if ${\rm LocTrapp}$ occurs, then the walkers eventually localize (or are trapped) in the circuits $(C_i)_{1\leq i\leq k}$. 
\begin{theorem}\label{thm:example2}Assume the the graph $G$ has at least two circuits. One has the following phase transition depending on $p,q\in \bb R:$
	\begin{enumerate}
		\item if $p\leq 1+q$ then $\bb P({\rm Monot})=0,$
		\item if $p>1+\max\{q,0\}$ then  $\bb P({\rm LocTrapp})=1.$
	\end{enumerate}
\end{theorem}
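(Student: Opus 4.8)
The plan is to view (1) and (2) as the two regimes of one phase transition and, in each, to reduce the question to the behaviour of the process near a single circuit and then to invoke the comparison-with-a-reference-model strategy of the paper, the decisive input being a bound on a Radon--Nikodym derivative on a carefully chosen set of trajectories.

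For (1) I would argue by contradiction, assuming $\bb P(\mathrm{Monot})>0$. On $\mathrm{Monot}$ there is a random time $T$ after which every crossing number is monotone, hence after $T$ each edge is crossed in at most one direction; the walkers then move inside a fixed finite digraph, each walker's infinite trajectory is eventually confined to a strongly connected sub-digraph, and after discarding the transient part and passing to the most heavily traversed portion (along a suitable subsequence of times) one reduces to a walker winding around a circuit $C$ with $c_n(\vec e)\asymp n$ for $\vec e\in C$. The point is then that at each of the $\asymp n$ passages through a vertex $v=e_+$ of $C$ the conditional probability of a step that does \emph{not} continue along $C$ there --- a reversal, of weight of order $n^q$, or a step through a frozen side edge --- is bounded below by a positive multiple of $\min\{1,\,n^{q-p}\}$, and, in the degenerate range $p\le 0$ where the weights along $C$ vanish and escape happens through a side edge, simply by a positive constant. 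Since $p\le 1+q$, i.e. $q-p\ge-1$, these lower bounds are not summable, so the conditional Borel--Cantelli lemma forces the walker to leave $C$ infinitely often, which is impossible on $\mathrm{Monot}$; hence $\bb P(\mathrm{Monot})=0$. The delicate points are the reduction to a circuit with linearly growing crossing numbers and the bookkeeping of which out-weight at $v$ is dominant, including the degenerate cases in which some crossing number stays bounded.

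For (2) the easy half is $\bb P(\mathrm{LocTrapp})>0$: fix a circuit $C$ of $G$ and an orientation of it and observe that, conditionally on any history, with positive probability every walker first travels along a fixed finite path to $C$ (each such step of positive conditional probability, by the standing assumptions on the reinforcement functions) and then winds around $C$ in the chosen direction forever. During the winding phase the crossing numbers along $C$ grow linearly, so at the $k$-th passage through a vertex of $C$ the probability of leaving $C$ there is summable in $k$: $p>1$ controls escape through a frozen or slowly growing competing edge, and $p>1+q$ controls escape by reversal, whose weight $W_2$ of an argument of order $k$ is of order $k^q$ against the order-$k^p$ weight of the circuit edge. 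Borel--Cantelli then gives a positive probability that no walker ever leaves $C$, on which event $C$ is the unique circuit crossed infinitely often and is trapping, so $\mathrm{LocTrapp}$ holds.

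The main obstacle is upgrading ``positive probability'' to ``probability one'' in (2), and this is where the comparison machinery is essential. Following the paper's strategy, one introduces a reference law $\bb Q$ under which the environment seen by the walkers is replaced by a tractable one --- for instance with the crossing numbers frozen, or evolving through a decoupled dynamics --- chosen so that under $\bb Q$ non-localisation has probability zero and so that on the set of trajectories along which the walkers eventually wind around disjoint circuits the density $d\bb P/d\bb Q$ is controlled; the comparison lemma then transfers $\bb Q(\mathrm{LocTrapp})=1$ to $\bb P(\mathrm{LocTrapp})=1$. The hardest ingredient is the uniform control of the normalisations $Z_n(v)$: one must rule out that the crossing number of a ``wrong'' edge can grow fast enough to overwhelm $W_1$ evaluated at the linearly growing crossing numbers of a would-be trapping circuit, and this is exactly where the hypothesis $p>1+\max\{q,0\}$ is sharp --- the ``$1$'' secures the summability of $\sum_k k^{-p}$, and the ``$\max\{q,0\}$'' absorbs the possibility that a competing edge accumulates negative crossing number and is thereby reinforced through $W_2$.
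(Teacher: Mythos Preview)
Your proposal identifies the right quantitative threshold (the summability of $k^{q-p}$) and correctly anticipates that a change-of-measure against a reference process is the key to upgrading positive probability to probability one in part~(2). However, both halves contain genuine gaps when compared with the paper's argument.

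For part~(1), the step ``one reduces to a walker winding around a circuit $C$ with $c_n(\vec e)\asymp n$'' is not justified and is in fact unnecessary. On $\mathrm{Monot}$ a walker is eventually confined to a strongly connected sub-digraph, but that digraph need not be a single circuit, and the crossing numbers along any particular circuit need not grow linearly in global time. The paper bypasses this reduction entirely: it fixes a vertex $v$, uses the flow property $\sum_{w\sim v}c_n(v,w)\in\llbracket -N,N\rrbracket$ to show that the positive and negative out-flows at $v$ are comparable, and then bounds the Radon--Nikodym derivative of $\vec X$ against the reference process uniformly over \emph{all} monotone trajectories. The divergence $\Sigma(W_2/W_1)=\infty$ drives this derivative to zero, which is the rigorous version of your conditional Borel--Cantelli heuristic, but without any circuit reduction.

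For part~(2), the reference law is not ``crossing numbers frozen'' or ``decoupled dynamics''; neither of these would yield a process that localises almost surely in circuits. The paper's reference process $\vec Y^{\tau}$ is a multi-particle \emph{edge-reinforced} random walk on a carefully constructed directed graph $D_\tau$ (built at a stopping time $\tau$ chosen so that every walker sits at a vertex with at least one large outgoing crossing number), using only the reinforcement $W_1$. The crucial point you are missing is that $\vec Y^\tau$ can be realised as a family of \emph{independent balls-in-bins processes}, one per vertex of $D_\tau$, with feedback $W_1$; Rubin's theorem then gives $\vec Y^\tau\in\mathcal C_\tau^\uparrow$ almost surely when $\Sigma(W_1^{-1})<\infty$. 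The Radon--Nikodym lower bound $d\vec X/d\vec Y\ge\delta$ on $\mathcal A_\tau^\uparrow$ then yields $\bb P(\vec X_\tau^\infty\in\mathcal C_\tau^\uparrow)\ge\delta$, and iterating with a sequence of stopping times plus Borel--Cantelli gives probability one. Your separate ``first show positive probability'' argument is correct but redundant, and your description of how the comparison ``transfers $\bb Q=1$ to $\bb P=1$'' skips the iteration step, which is where the uniformity of $\delta$ in the starting configuration is actually used.
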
  

\begin{figure}[h]
\label{fig:theorem2}
\begin{tikzpicture}

\draw[->, dashed] (-3,0) -- (3,0) node[anchor=north] {$q$};
\draw	(-0.2,0) node[anchor=north] {0}
		(1,0) node[anchor=north] {1}
		(-1,0) node[anchor=north] {-1};
\draw	(-1,2) node{{\scriptsize LocTrapp}}
		(2,1) node{{\scriptsize Complement of Monot}}
		(-1.5,0.5) node{{\scriptsize Not known}};

\draw[->,dashed] (0,-1.5) -- (0,3) node[anchor=east] {$p$};

\draw
		(0.2,1) node[anchor=north] {1}
		(0.2,-1) node[anchor=north] {-1};

\draw[thick] (-3,1) -- (0,1);
\draw[thick] (-2.5,-1.5) -- (0,1);
\draw[thick] (0,1) -- (2,3);

\end{tikzpicture}
\caption{Phase transition diagram.}
\end{figure}
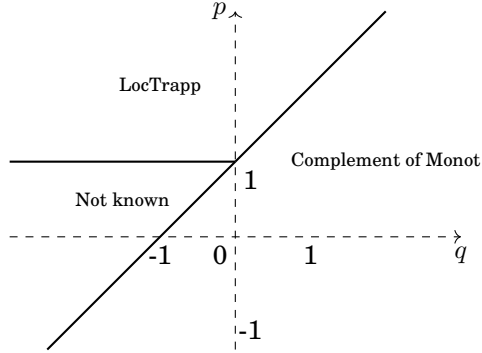
Notice in Figure~2 that for the case $q\leq 0$ we assume $p>1$ to prove that the Local Trapping event happens with probability one.  Furthermore, the Local Trapping event happens with probability one if $q>0 $ and $p>1+q$. In the case $p\leq 1+q$, Theorem~\ref{thm:example2} says that the Monotone event does not happen almost surely.
Observe that Theorem~\ref{thm:example2} does not cover the case $q\leq 0$ and and $1+q< p\leq 1$.

See Theorems~\ref{thm:PMonot0} and~\ref{theo:very-strong} for a complete statement.
\begin{remark}  \rm The above result  implies that trapping in a circuit does not follow alone from the summability of the inverse of $W_1$. By~\cite{cotar2017} for the edge reinforced random walk that would be sufficient.
	Indeed, in the current framework what matters as well is the relation between the reinforcements we impose on edges with positive and negative crossing numbers $c_n(\vec{e})$. One needs to guarantee that the walkers have a bias towards choosing edges with positive crossing numbers. 
\end{remark}

Before reviewing the literature, we present some figures to illustrate Theorem~\ref{thm:example2}.

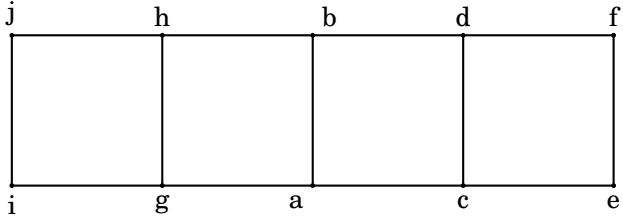
\begin{figure}[h!]
\centering
\begin{tikzpicture}[scale=0.5]
	
	\coordinate (g1) at (0,-2);
	\coordinate (g2) at (0, 2);
	\coordinate (g3) at (4,-2);
	\coordinate (g4) at (4, 2);
	\coordinate (g5) at (8,-2);
	\coordinate (g6) at (8,2);
		\coordinate (l3) at (-4,-2);
	\coordinate (l4) at (-4, 2);
	\coordinate (l5) at (-8,-2);
	\coordinate (l6) at (-8,2);
	\draw[fill] (g1) circle [radius=0.05];
	\node [below left] at (g1) {a};
	\draw[fill] (g2) circle [radius=0.05];
	\node [above right] at (g2) {b};
	\draw[fill] (g3) circle [radius=0.05];
	\node [below] at (g3) {c};
	\draw[fill] (g4) circle [radius=0.05];
	\node [above] at (g4) {d};
	\draw[fill] (g5) circle [radius=0.05];
	\node [below] at (g5) {e};
		\draw[fill] (g6) circle [radius=0.05];
	\node [above] at (g6) {f};
	\draw[fill] (l3) circle [radius=0.05];
	\node [below] at (l3) {g};
	\draw[fill] (l4) circle [radius=0.05];
	\node [above] at (l4) {h};
	\draw[fill] (l5) circle [radius=0.05];
	\node [below] at (l5) {i};
		\draw[fill] (l6) circle [radius=0.05];
	\node [above] at (l6) {j};
	\draw [thick] (g1) -- (g2);
	\draw [thick] (g1) -- (g3);
	\draw [thick] (g2) -- (g4);
	\draw [thick] (g3) -- (g4);
	\draw [thick] (g3) -- (g5);
	\draw [thick] (g4) -- (g6);
	\draw [thick] (g5) -- (g6);
	\draw [thick] (g1) -- (l3);
	\draw [thick] (g2) -- (l4);
	\draw [thick] (l3) -- (l4);
	\draw [thick] (l3) -- (l5);
	\draw [thick] (l4) -- (l6);
	\draw [thick] (l5) -- (l6);
\end{tikzpicture}
\caption{Example of a finite graph.}
\label{fig:1}
\end{figure}

Assume that the graph $G$ is as in Figure~\ref{fig:1} above, and assume that $p>1+q$. Then the above theorem states that the set of walkers split the graph into disjoint subsets in the sense that eventually different groups of walkers spin around disjoint circuits of the graph. For example it may happen that eventually the walkers will be stuck in the thick circuits $(g,i,j,h)$ and $(c,d,f,e)$ in Figure~\ref{fig:4}, and the missing edges will be crossed only a finite number of times.

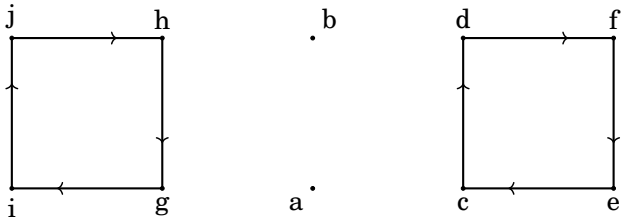
\begin{figure}[h!]
\centering
\begin{tikzpicture}[scale=0.5]
	
	\coordinate (g1) at (0,-2);
	\coordinate (g2) at (0, 2);
	\coordinate (g3) at (4,-2);
	\coordinate (g4) at (4, 2);
	\coordinate (g5) at (8,-2);
	\coordinate (g6) at (8,2);
		\coordinate (l3) at (-4,-2);
	\coordinate (l4) at (-4, 2);
	\coordinate (l5) at (-8,-2);
	\coordinate (l6) at (-8,2);
	\draw[fill] (g1) circle [radius=0.05];
	\node [below left] at (g1) {a};
	\draw[fill] (g2) circle [radius=0.05];
	\node [above right] at (g2) {b};
	\draw[fill] (g3) circle [radius=0.05];
	\node [below] at (g3) {c};
	\draw[fill] (g4) circle [radius=0.05];
	\node [above] at (g4) {d};
	\draw[fill] (g5) circle [radius=0.05];
	\node [below] at (g5) {e};
		\draw[fill] (g6) circle [radius=0.05];
	\node [above] at (g6) {f};
	\draw[fill] (l3) circle [radius=0.05];
	\node [below] at (l3) {g};
	\draw[fill] (l4) circle [radius=0.05];
	\node [above] at (l4) {h};
	\draw[fill] (l5) circle [radius=0.05];
	\node [below] at (l5) {i};
		\draw[fill] (l6) circle [radius=0.05];
	\node [above] at (l6) {j};
	\draw [thick, big arrow 3] (g3) -- (g4);
	\draw [thick, big arrow 3] (g5) -- (g3);
	\draw [thick, big arrow 3] (g4) -- (g6);
	\draw [thick, big arrow 3] (g6) -- (g5);
	\draw [thick, big arrow 3] (l4) -- (l3);
	\draw [thick, big arrow 3] (l3) -- (l5);
	\draw [thick, big arrow 3] (l6) -- (l4);
	\draw [thick, big arrow 3] (l5) -- (l6);
\end{tikzpicture}
\caption{Trapping circuits for $p>1$.}
\label{fig:4}
\end{figure}

The above models fit in the class of random processes with reinforcement. The basic models are urn models and its many variants. We refer the reader to the survey of Pemantle \cite{pemantle2007} for an overview. Here, we will restrict ourselves only to those models that are closely related to ours.

There are manifold motivations to study random processes with reinforcement. One of them is that they can be used to model the behaviour of ants. The article~\cite{ma_xia_yang_2017} from the computer science community, studies an algorithm based on random walks. The algorithm is designed to detect communities on complex networks and uses the idea that ants learn through the pheromone they leave on their path. Regarding the probability community, the authors in~\cite{Kious2020}  propose a model to find a shortest path between two points in a graph through reinforcement learning. Roughly speaking, the ants perform successive random walks in the graph and, when it finds the food, its path is reinforced according to a reinforcement rule. Their methods are based on electrical network techniques for random walks and on Rubin's construction.

\subsubsection*{Literature on non conservative zero range process}

Non conservative versions of the well known zero range process have attracted interest in the physics as well as in the mathematics community. To the best of our knowledge models of similar kind were first introduced in \cite{Arnold1981} with the aim to model chemical reactions.

With the goal to understand condensation phenomena the physics paper~\cite{Angel_2007} considers a model similar to ours.  Based on heuristic arguments they developed  a detailed phase diagram depending on the rates of creation, annihilation, and jump, when the number of sites tends to infinity.

Regarding the mathematics community the article~\cite{Franco2012} is the most relevant for our discussion. Motivated by earlier works, in particular by~\cite{Blount91,Blount92}, the authors consider the model on the discrete one dimensional torus with $N$ vertices. They fix functions $b,d\,:\,\bb R_+\to \bb R_+$ that determine the rate of creation and annihilation, and each particle jumps independently of all other particles. 
Sending $N\to\infty$, the authors established a law of large numbers for the density of particles under an appropriate rescaling.  
In particular if $b(s)-d(s)=s^p$, for $1< p \leq 3$, the limiting equation defined on the one-dimensional torus takes the form
\begin{equation}
\partial_t u = \partial_{xx} u + u^p\,.
\end{equation}
This equation is interesting, since under an appropriate boundary condition it is known that~\cite[Proposition 28.1]{Quittner2007} if $p>3$ the equation has a solution whose $L^1$-norm is finite but whose $L^\infty$ blows up, whereas for $p< 3$ both norms blow up.
In Theorem~\ref{thm:NCZR} we can observe one part of that phenomena that is similar in nature. Indeed, if $p$ is large enough, then there is a unique site on which particles are created and all other sites are at a stand still (corresponding to a blow-up of the $L^\infty$ but not the $L^1$-norm). 

The difference to the present work of course is that the particles do not jump independently of each other (except in the case in which $r=1$), and space is fixed.

\subsubsection*{Comparison with the Edge Reinforced Random Walk.}

The Edge Reinforced Random Walk (ERRW) on undirected graphs is defined in a similar way as the Ant RW. However, for the ERRW one considers undirected edges, and typically only a single walker. More precisely, one assigns a weight $c_n(e)$ to each \textit{undirected} edge $e$, where $c_n(e)$ is the number of times that the walker crossed $e$ irrespective of the direction. In this case only one reinforcement function $W_1$ is needed and the rate to cross an edge $e$ (in any direction) is given by $W_1(c_n(e))$. In particular, if the walker crossed $e$ it increases the chances of crossing it in the opposite direction. This is not the case for the Ant RW. 

The Ant RW shares some similarities with the ERRW on directed graphs but these two processes are not equal. Indeed, as described in~\cite{pemantle2007}, in the ERRW on directed graphs each directed edge $\vec{e}=(u,v)$ has the crossing number $c_n(u,v)$ defined to be as the number of times $1\leq i\leq n$ such that $X_{i-1}=u$ and $X_{i}=v$. Observe that for the Ant RW we subtract from $c_n(u,v)$ the number of times $1\leq i\leq n$ such that $X_{i-1}=v$ and $X_{i}=u$ and we have the relation $c_n(u,v)=-c_n(v,u)$. As a consequence, for the Ant RW the probabilities of successive transitions out a vertex $u$ are affected by the edge the RW used to arrive at $u$. This is not the case for the ERRW on directed edges. Therefore,  even though we use directed edges in our definition of the Ant RW, our model is very different from the ERRW on directed graphs.

The behaviour of the ERRW depends on the assumption one makes about the reinforcement function $W_1$. Two cases are of particular interest. The case of superlinear reinforcement, i.e., $\sum_kW_1(k)^{-1}<\infty$, and the case of linear reinforcement, i.e., $W_1(k)=k$. In the case of superlinear reinforcement the walker localizes on a single edge, i.e., it eventually only jumps across that edge. This phenomena does not occur if the reinforcement is linear. In that case the question of interest is whether the walker is recurrent or transient~\cite{Pemantle1988,merkl}. A feature used to study such a question is that the ERRW with linear reinforcement is partially exchangeable. We point out that our models are not partially exchangeable due the nature of the reinforcement mechanism.

In our model we observe and study different phenomenas. Indeed, under suitable conditions on the relation between $W_1$ and $W_2$, if $W_1$ grows superlinearly localization happens on a circuit, i.e., on at least three edges.

As noted in \cite[Section~5.1]{pemantle2007} for the ERRW either on directed graphs and on trees a direct coupling with a collection of independent P\'olya urns are possible. The behaviour of the ERRW on these classes of graphs are successfully described through this coupling. However, for more general graphs this is not possible. A frequent strategy presented in several works is to implement a comparison of the ERRW with P\'olya urns schemes instead of a direct coupling. A strategy that we also employ as we will explain.

The ERRW with superlinear reinforcement has a long history and it took various stages of refinement to arrive at the full result. We mention here only some selected works. The localization result in~\cite{Limic2003} is derived through a construction of the ERRW similar to Rubin's construction. To implement the comparison with the P\'olya urns scheme the author constructs a suitable super martingale. However,  in~\cite{Limic2003} the reinforcement function is restricted to the form $W_1(k)=k^p$. In~\cite{LimicTarres2007} the authors increased the class of reinforcement functions for which the same localization result holds using a similar strategy. 

The problem was then finally completely solved in the work of Cotar and Tacker~\cite{cotar2017}. The authors prove localization of the ERRW only assuming a superlinear reinforcement. The techniques in~\cite{cotar2017} rely on estimates of the probability to achieve a given configuration for the vector of crossing numbers $(c_n(e))_{e}$ and it does not rely on Rubin's construction. 

The above mentioned methods do not seem to apply directly to our case. The main reason is that for the ERRW the vector of numbers $(c_n(e))_{e}$ are non negative and monotone in time. This is not satisfied in our case and we need one additional step before pursuing a comparison with P\'olya urns schemes. On the other hand, our methods do not seem to apply to the ERRW on undirected graphs. Our technique are designed for models with two or more reinforcement functions in competition in the case that one reinforcement function dominates the others.

\subsubsection*{Main ideas of the proof}
Erhard, Franco, and Reis~\cite{EFR2019} introduced the Ant RW with exponential reinforcement. There it is shown that a single walker eventually localizes in a circuit.  To obtain the result the authors derived a lower bound which is uniform in the environment for the probability that the  walker is trapped in a carefully chosen circuit. No comparison with P\'olya urns was needed. But the method in~\cite{EFR2019} relied heavily on the choice of the exponential reinforcement and does not work for more general reinforcement functions as in Theorem~\ref{thm:example2}. Indeed, for polynomial reinforcement, it can be shown that the probability of being trapped in any circuit is arbitrarily close to zero as a function of the environment. 

Motivated by the mentioned previous works one can pursue a comparison with P\'olya urns schemes. However, at first sight this strategy seems to not apply because, as pointed out before, the vector of crossing numbers for the Ant RW is not monotone and can be negative. The crucial observation is that one can seek a comparison with P\'olya urns schemes conditioned on the event that the vector of crossing numbers is monotone. 

Our main achievement is to present a new way to implement this old strategy. We do that by introducing a reference (or auxiliary) edge reinforced random walk on a directed graph. The reference model is constructed via a P\'olya urn scheme which is easier to analyse. The next step is to compare the original model with the reference model. We do the comparison via the Radon Nikodym derivative of the two processes. We obtain a lower bound on the Radon Nikodym derivative that is uniform on a carefully chosen set of trajectories.  Then we can conclude the argument via Borel Cantelli's Lemma.

The proof for the NCZR then follows by an adaptation of the above arguments.

\subsection{Structure of the paper}
For pedagogical reasons we invert the presentation of the models in the paper. We start with the simpler NCZR and then we study the Ant RW. The outline of the article is as follows:
\begin{itemize}
	\item In Section~\ref{sec:preliminaries} we introduce important notation and the balls-in-bins process.
	\item In Section~\ref{sec:NCZR} we formally introduce the NCZR process and prove Theorem~\ref{thm:NCZR}.
	\item In Section~\ref{sec:AntRW} we formally introduce the Ant RW and fully state  our main results.
	\item In Section~\ref{sec:proofAntRW} we prove the results about the Ant RW.
	\item In the Appendix~\ref{sec:proofexamples} we give examples of classes of functions $W_1$ and $W_2$ that satisfy our various assumptions.
\end{itemize}

\section{Preliminaries}
\label{sec:preliminaries}

In this section we fix some notation, terminology, and we review some results. 

\subsection{Natural numbers}
We write $\bb N=\{0,1,2,\cdots\}$ for the set of natural numbers. For $n \in \bb N$ we define the set $[n]=\{1,\cdots, n\}.$

Let $a,b\in \bb N$ with $a<b.$ We write $\llbracket a,b\rrbracket$ for the discrete interval $\{i\in \bb N\,:\,a\leq i\leq b\}.$ We write $\llbracket a,\infty)$ for the discrete interval $\{i\in \bb N\,:\,a\leq i\}.$

\subsection{Undirected and directed graphs}
We start with graphs and its basic objects. An undirected graph  $G$ is a pair $(V,E)$ where $V$ is a countable set of vertices and $E$ is a set of unordered pairs of $V$.  Given two vertices $v$ and $w$ we write $v\sim w$ if the pair $\{v,w\}$ forms an edge. The degree of $v$ in $G$, $\deg_{G}(v),$ is the number of edges $\{v,w\}$ with $w \in V.$ All graphs considered in this work are locally finite,  connected, without parallel edges and without loops. We associate to $G$ the following quantity
\begin{equation}\label{eq:maxmindeg}
\Delta(G)= \max_{v\in V} \deg_{G}(v),
\end{equation}
which is the maximum degree of $G$. 

A directed graph $D$ is also a pair $(V,\vec{E})$ but in this case $\vec{E}$ is a set of ordered pairs of $V$. We define the outer degree of $v\in V$ via:
\begin{itemize}
	\item  $\deg_{D}^{\rm out}(v)=|\{(v,w)\in \vec{E}\,;\,w\in V\}|.$
\end{itemize}
For an undirected graph $G=(V,E)$ we often identify each non-oriented edge with two oriented edges. More specifically, we define the directed graph $\vec{G}$ where $V(\vec{G})=V(G)$ and $\vec{E}(\vec{G})=\{(v,w),\,(w,v)\,:\,\{v,w\}\in E(G)\}.$ 

\subsection{Circuits and paths}\label{sec:circpath}
We introduce some objects for directed graphs. The definitions for undirected graphs are similar and will be omitted.

Let $D=(V,\vec{E})$ be a directed graph. We say that a collection of vertices $C=(u_0,\ldots,u_{\ell-1})$ is a circuit of length $\ell$ with starting point $u_0$ if \rm{(1)} $u_i\neq u_j$ for $0\leq i\neq j\leq \ell-1$ ,  \rm{(2)} $(u_i,u_{i+1})\in \vec{E}$ for $0\leq i\leq \ell -2$ and if moreover \rm{(3)} $(u_{\ell-1}, u_0)\in \vec{E}$. We also identify two circuits $C$ and $C'$, if one can be obtained from the other by a rotation. This defines an equivalence relation, and any circuit $C$ is identified with its equivalence class. We write $C(D)$ for the total number of circuits in the directed graph $D$.

A path  in $D$ is a map $\pi\,:\,\bb{N}\to V$  such that $(\pi(i),\pi(i+1))\in \vec{E}$, for all $i\in\bb N$.

Let $X$ be a set and $f:\bb{N}\to X$ a function. For $a,b\in\bb N$ and $a < b$ one defines the restriction $f_a^b$ as the map $n\in \llbracket a,b\rrbracket\mapsto f(n)$. Similarly, we write $f_a^\infty$ for the restriction of $f$ to the interval $\llbracket a,\infty)$.

\subsection{Lattice version of a directed graph} 

Given a directed graph $D$ as above, we consider a finite family of random walks on $D$ such that two distinct walkers never jump at the same time.  It is convenient to describe all random walks as a single object. For that reason we introduce the \textit{lattice} version of $D.$

We fix a directed graph $D=(V,\vec{E})$ and a number $N \in \bb N$ of particles. We write $D^N$ for the lattice version of $D$ of dimension $N$ and it is the graph with vertex set
\begin{equation*}
V^N= \{\vec{v}:=(v_1,\ldots, v_N):\, v_1,\ldots, v_N\in V\}\,,
\end{equation*}
and edge set
\begin{equation*}
\begin{split}
\vec{E}^N=\{(\vec v,\vec w)\in V^N\times V^N:&\, \exists\, j\in\{1,2,\ldots,N\} \\
&\text{ s.t. }v_i=w_i\,\forall\, i\neq j \text{ and }(v_j, w_j)\in \vec{E}\}.
\end{split}
\end{equation*}
Observe that the standard lattice $\bb Z^N$ is the lattice version of $\bb Z.$ The collection of random walks will be described as a single (random) trajectory $\pi\,:\,\bb N \longrightarrow V^N.$

\subsection{Markov chains and stopping times}

Let $(Z_n)_{n \in \bb N}$ be a discrete time Markov chain and $(\mc F_n)_{n\in \bb N}$ be its natural filtration.  For $k\in \bb N$ we define the shift operator $\mc S_k$ which associates $(Z_n)_{n \in \bb N}$ with $(Z_n)_{n\geq k}\,,$ i.e. $(\mc S_k\circ  Z)_n= Z_{n+k}$ for all $n\in\bb N$. 

Given a stopping time $\tau$ and $k \in \bb N$ we define the stopping time $\tau\circ \mc S_k$ which is the original stopping $\tau$ seen as a function of $(Z_n)_{n\geq k}.$ The above construction makes also sense if we replace $k$ by another stopping time that is finite almost surely.

\subsection{Balls-in-Bins with feedback}
\label{sec:balls-in-bins}
We introduce an essential example of a reinforcement process for this work.

Let $S$ be a finite set. We consider a vector evolving in time $(\eta_v(n);\,v\in S)_{n\in \bb N}.$ We can interpret $\eta_v(n)$ as the number of balls in bin $v\in S$ at time $n$. In our examples we will choose $S$ as either the set of vertices of a finite graph or the set of neighbours of a fixed vertex. To throw a ball into the bin $v$ (creation of a ball) at time $n$ corresponds to the following transition
\begin{equation*}
\eta_w(n+1)=\begin{cases}
\eta_v(n)+1, &\text{if }w=v,\\
\eta_w(n), &\text{otherwise.}
\end{cases}
\end{equation*}
We fix a feedback (reinforcement) function $f:\bb N\to (0,+\infty).$ Conditioned on  $(\eta_u(n);\,u\in S)$ the above transition occurs with probability
\begin{equation*}
\frac{f(\eta_v(n))}{\sum_{w\in S}f(\eta_w(n))}.
\end{equation*}

We introduce the following events:
\begin{itemize} 
	\item ${\rm Mon}$ is the event that there exists a unique site $w$ and a time $m \in \bb N$ such that $\eta_w(n+1)=\eta_w(n)+1$ and $\eta_v(n+1)=\eta_v(n)$ for $v\neq w$ (monopoly) and for all $n\geq m$.
\end{itemize}    

To simplify notation, given a generic function $g:\bb N \to (0,\infty)$ we write
\begin{equation}
\Sigma(g):=\sum_{k=0}^{\infty}g(k).
\end{equation}
The following result that can be found in~\cite{2005Oliveira} is one of the cornerstones of our proof. After a successful comparison with a P\'olya urn scheme it allows us to conclude the desired result.
\begin{theorem}[Rubin's construction]\label{thm:Rubin}
	For the Balls-in-Bins process with feedback function $f$ the following holds irrespective of the initial conditions:
		 If $\Sigma(f^{-1})<\infty$  then $\bb P({\rm Mon})=1$ (monopolistic regime).
\end{theorem}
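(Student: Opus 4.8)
The plan is to use the classical Rubin construction, which embeds the discrete-time Balls-in-Bins process into a system of independent continuous-time pure-birth processes. First I would set up, for each bin $v\in S$, an independent sequence $(E_j^v)_{j\geq 0}$ of exponential random variables with $E_j^v$ having rate $f(j)$, and define the partial sums $T_k^v=\sum_{j=0}^{k-1}E_j^v$, which record the (fictitious) time at which bin $v$ receives its $k$-th ball in the continuous-time picture. Running all these clocks in parallel and always incrementing whichever bin's next alarm rings first produces a process whose embedded jump chain has exactly the transition probabilities $f(\eta_v(n))/\sum_w f(\eta_w(n))$ specified for the Balls-in-Bins process; this is the standard competing-exponentials computation and I would only sketch it. The key point of the embedding is that the initial configuration only shifts the starting sums $T^v$ by a finite constant and hence does not affect the almost-sure statements — this is why the conclusion holds irrespective of initial conditions.

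Next I would analyze the explosion time of each clock. For a fixed bin $v$, the total time to place infinitely many balls in $v$ is $T_\infty^v=\sum_{j=0}^\infty E_j^v$, a sum of independent exponentials. Since $\bb E[E_j^v]=1/f(j)$ and $\Sigma(f^{-1})=\sum_j f(j)^{-1}<\infty$ by hypothesis, monotone convergence gives $\bb E[T_\infty^v]<\infty$, so $T_\infty^v<\infty$ almost surely; each bin explodes in finite (fictitious) time. Moreover the $T_\infty^v$, $v\in S$, are independent and each has a density (being an infinite convolution of exponentials with distinct rates, or at least a continuous distribution), so with probability one they are pairwise distinct. Let $w$ be the (a.s. unique) bin achieving $\min_{v\in S} T_\infty^v =: t_\star<\infty$.

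Then I would conclude as follows. In the continuous-time picture, by time $t_\star$ bin $w$ has already received infinitely many balls while every other bin $v\neq w$ has received only finitely many (since $T_\infty^v>t_\star$ means only finitely many of the alarms $T_k^v$ precede $t_\star$). Translating back through the embedding: in the discrete Balls-in-Bins process there is an a.s. finite time $m$ after which every increment lands in bin $w$, i.e. $\eta_w(n+1)=\eta_w(n)+1$ and $\eta_v(n+1)=\eta_v(n)$ for $v\neq w$ and all $n\geq m$. That is precisely the event ${\rm Mon}$, so $\bb P({\rm Mon})=1$.

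The main obstacle — and the only genuinely non-routine point — is justifying the embedding rigorously, namely that the jump chain of the parallel continuous-time birth processes is distributionally identical to the prescribed Balls-in-Bins chain, and that "after the first explosion time only one bin is ever incremented" corresponds correctly to the discrete-time event ${\rm Mon}$ rather than being disrupted by what happens at or after $t_\star$. Here one uses that the discrete chain is recovered by reading off only the order of the alarm times, which are all strictly below $t_\star$ for the losing bins and accumulate at or below $t_\star$ for the winner, together with the a.s. distinctness of the explosion times to rule out ties. Since this is exactly Rubin's construction as presented in~\cite{2005Oliveira}, I would cite that reference for the details and keep the argument at the level of the sketch above.
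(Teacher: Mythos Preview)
Your sketch is correct and is precisely Rubin's construction. Note, however, that the paper does not actually prove Theorem~\ref{thm:Rubin}: it simply quotes the result from~\cite{2005Oliveira} and uses it as a black box, so there is no ``paper's own proof'' to compare against. Your proposal goes further than the paper by outlining the argument behind the citation, and that outline is the standard one --- independent exponential clocks per bin, finite expected explosion time under $\Sigma(f^{-1})<\infty$, almost-sure distinctness of the explosion times, and identification of the winning bin. The only mildly imprecise point is the handling of initial conditions: rather than ``shifting the starting sums by a finite constant,'' one should start bin $v$'s clock sequence at rate $f(\eta_v(0))$, $f(\eta_v(0)+1)$, \ldots; but since the hypothesis concerns the tail $\sum_k f(k)^{-1}$, this changes nothing in the argument.
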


\section{Non conservative zero range process}
\label{sec:NCZR}

In this section we provide a precise definition of the NCZR, and prove Theorem~\ref{thm:NCZR}. This should be seen as a warm up for the proof of the corresponding result for the Ant RW.

We fix a finite graph $G=(V,E)$. For a vertex $v\in G$ we define the following operations on elements $\eta=(\eta_w;\,w\in G)\in \bb N^V$:
\begin{itemize}
	\item We define $\eta^{+v}=(\eta_w^{+v}; w\in G)$ via 
	\begin{equation*}
	\eta^{+v}_w\;=\;
	\left\{
	\begin{array}{cl}
	\eta_w\,,& v\neq w\,,\\
	\eta_v+1\,,& v=w\,.\\
	\end{array}
	\right.
	\end{equation*}
	\item If $\eta_v\geq 1$, we define $\eta^{-v}=(\eta_w^{-v}; w\in G)$ via
	\begin{equation*}
	\eta^{-v}_w\;=\;
	\left\{
	\begin{array}{cl}
	\eta_w\,,& v\neq w\,,\\
	\eta_v-1\,,& v=w\,.\\
	\end{array}
	\right.
	\end{equation*}
	\item If $\eta_v\geq 1$ and $u\sim v,$ we define $\eta^{vu}=(\eta_w^{vu}; w\in G)$ via
	\begin{equation*}
	\eta^{vu}_w\;=\;
	\left\{
	\begin{array}{cl}
	\eta_w\,,& w \notin \{v,u\}\,,\\
	\eta_{u}+1\,,& u=w  \,,\\
	\eta_v-1\,,& v=w\,.
	\end{array}
	\right.
	\end{equation*}
\end{itemize}

We proceed to define a Markov chain $H=(H(n);\,n\in \bb N)$ with state space $\bb N^V$. We assume that $H(0)$ is non-zero, and we fix parameters $p,q,r\in \bb R$. The transition probabilities are given by
\begin{equation*}
\begin{split}
\bb P\big[H(n+1)=\eta^{+v}|H(n)=\eta\big]&=\frac{\eta_v^p}{\sum_{w \in V}\big[\eta_w^p+\eta_w^q+\deg(w)\eta_w^r\big]}\,,\\
\bb P\big[H(n+1)=\eta^{-v}|H(n)=\eta\big]&=\frac{\eta_v^q}{\sum_{w \in V}\big[\eta_w^p+\eta_w^q+\deg(w)\eta_w^r\big]}\,,\\
\bb P\big[H(n+1)=\eta^{vu}|H(n)=\eta\big]&=\frac{\eta_v^r}{\sum_{w \in V}\big[\eta_w^p+\eta_w^q+\deg(w)\eta_w^r\big]}\,.
\end{split}
\end{equation*}
Hereafter we adopt the convention that $0^p=1$. This is only necessary to guarantee that our process will not be stuck in the zero configuration. But we still have $0^q=0$ and $0^r=0$.

The event ${\rm Mon}$ is defined as in Section~\ref{sec:balls-in-bins} with $(H(n);\,n\in \bb N)$ instead of $(\eta(n);\,n\in \bb N).$ The rigorous definition of ${\rm Creat}$ is the following: there exists $m\in \bb N$ such that for each $n\geq m$ there exists a vertex $v(n)\in G$ with $H(n+1)=H(n)^{+v(n)}.$

With the above definitions in mind we now prove Theorem~\ref{thm:NCZR}.

\subsubsection*{Proof of Theorem~\ref{thm:NCZR}. }

The strategy of the proof is the following. We construct an auxiliary balls-in-bins process $K^{\tau}$ associated to $H$ and to a finite stopping time $\tau$. Depending on the parameters $p,q,r\in\bb R_+$ it is then possible to compare $K^\tau$ and $H$ in such a way that it allows to deduce the desired result from Theorem~\ref{thm:Rubin}.

Let $\tau$ be a finite stopping time. The auxiliary process $K^{\tau}=(K^{\tau}(n);\,n\geq 0)$ is defined through:
\begin{enumerate}
	\item For $n\in[0,\tau]$ we set $K^{\tau}(n)=H(n).$
	\item For $n > \tau$, $K^{\tau}$ evolves like a balls-in-bins process with feedback function $k\in \bb N\mapsto k^p$, i.e., accordingly to the following probability transitions: for any $v\in G,$
	\begin{equation*}
	\begin{split}
	\bb P\big[K^{\tau}(n+1)=\eta^{+v}|K^{\tau}(n)=\eta\big]&=\frac{\eta_v^p}{\sum_{w \in V}\eta_w^p}\,.
	\end{split}
	\end{equation*}
\end{enumerate}

Our next step is to relate the processes $H$ and $K^{(\tau)}$ via a Radon-Nikodym derivative on the set of trajectories 
\begin{equation}\label{eq:defA+}
\mc{A}^{+}=\{(\eta(m);\,m\in \bb N)\,:\,\forall\,n\in \bb N,\,\eta(n)\in \bb N^V,\,\exists\,v(n)\in V;\,\eta(n+1)= \eta(n)^{+v(n)}\}.
\end{equation}  
In plain words, trajectories in $\mc{A}^{+}$ neither perform walker steps nor annihilate particles. Note that, recalling the notation from Section~\ref{sec:circpath},
\begin{equation}\label{eq:createA+}
{\rm Creat}= \{\eta:\, \exists\, n\in\bb N\, \text{ s.t. } \eta_n^\infty\in\mc A^+\}\,.
\end{equation}

Let $\eta=(\eta(m);\,m\in \bb N)\in \mc{A}^{+}$. For a finite stopping time $\tau$ and $m\in \bb N$ with $m\geq \tau$ we write $\eta_{\tau}^{m}:=(\eta(n);\,\tau\leq n\leq m),$ and with a slight abuse of notation we also write that $\eta_\tau^m\in\mc A^+$. Similar definitions hold for $H$ and $K^{\tau}$. For any $\eta_{\tau}^m\in \mc{A}^{+}$ we define
\begin{equation}\label{def:dHdK}
\frac{dH}{dK}(\eta_{\tau}^m):=\frac{\bb P(H_{\tau}^{m}=\eta_{\tau}^{m})}{\bb P(\big(K^{\tau}\big)_{\tau}^{m}=\eta_{\tau}^{m})}.
\end{equation}

The following Lemma is crucial because it will allow us to compare the original model with a P\'olya urn scheme. 
\begin{lemma}\label{lem:NCZR} In the above setting the following holds irrespective of the choice of $H(0):$ 
	\begin{itemize}
		\item If $p>1+\max\{q,r,0\}$ then there exists $\delta=\delta(|V|,\Delta(G),p,q,r)>0$ such that
		\begin{equation}\label{eq:infdHK}
		\inf\bigg\{\frac{dH}{dK}(\eta_{0}^{m});\,m\in \bb N,\,\eta_{0}^{m}\in \mc{A}^{+}\bigg\}\geq \delta. 
		\end{equation}
		\item If $p\leq 1+\max\{q,r\}$ then for any $\epsilon>0$ there exists $m_0=m_0(\epsilon,p,q,r,\Delta(G))$ such that for all $m\geq m_0$
		\begin{equation}\label{eq:supdHK}
		\sup\bigg\{\frac{dH}{dK}(\eta_{0}^{m});\,\eta_0^m\in \mc{A}^{+}\bigg\}< \epsilon.
		\end{equation}
	\end{itemize}
\end{lemma}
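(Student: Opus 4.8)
\textbf{Proof plan for Lemma~\ref{lem:NCZR}.}
The first step is to make the Radon--Nikodym derivative explicit. Both $H$ and $K^{0}=K^{(\tau)}$ with $\tau=0$ start from the deterministic configuration $H(0)$, and both assign positive probability to every finite creation-only trajectory (for $K^0$ this uses the convention $0^p=1$). So for $\eta_0^m\in\mc A^{+}$, writing $\eta(n+1)=\eta(n)^{+v(n)}$, the derivative telescopes into a product of one-step ratios, and in each ratio the creation weight $\eta_{v(n)}(n)^p$ appears in numerator and denominator and cancels:
\[
\frac{dH}{dK}(\eta_0^m)=\prod_{n=0}^{m-1}\frac{\sum_{w\in V}\eta_w(n)^p}{\sum_{w\in V}\big[\eta_w(n)^p+\eta_w(n)^q+\deg(w)\eta_w(n)^r\big]}=\prod_{n=0}^{m-1}\frac{1}{1+a_n},
\]
where $a_n:=\big(\sum_w\eta_w(n)^q+\sum_w\deg(w)\eta_w(n)^r\big)\big/\sum_w\eta_w(n)^p\ge 0$. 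Each factor lies in $(0,1]$, so the product is nonincreasing in $m$, and both assertions reduce to controlling the rate of convergence/divergence of $\sum_n a_n$, uniformly over $\eta_0^m\in\mc A^+$. Only one structural fact about $\mc A^+$ will be used: exactly one particle is created per step, so $N_n:=\sum_w\eta_w(n)=N_0+n$ with $N_0:=\sum_w H(0)_w\ge 1$, and by pigeonhole $M_n:=\max_w\eta_w(n)\ge N_n/|V|$.

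For the first bullet, assume $p>1+\max\{q,r,0\}$. Bounding the numerator of $a_n$ crudely (each $\eta_w^q\le\max\{1,M_n^q\}$, each $\eta_w^r\le\max\{1,M_n^r\}$) gives, up to a constant depending only on $|V|$ and $\Delta(G)$, a bound $C_1N_n^{\max\{q,r,0\}}$ on the numerator, while the denominator is $\ge M_n^p\ge(N_n/|V|)^p$ since here $p>0$. Hence $a_n\le C\,N_n^{\,\max\{q,r,0\}-p}=C\,(N_0+n)^{-1-\epsilon}$ with $\epsilon:=p-1-\max\{q,r,0\}>0$ and $C=C(|V|,\Delta(G),p,q,r)$. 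Using $N_0\ge 1$ we get $\sum_{n\ge 0}a_n\le C\sum_{n\ge 0}(1+n)^{-1-\epsilon}=:C'<\infty$ with $C'$ independent of $H(0)$, and since $1+a\le e^{a}$,
\[
\frac{dH}{dK}(\eta_0^m)\ \ge\ \prod_{n=0}^{m-1}e^{-a_n}\ \ge\ e^{-C'}\ =:\ \delta>0,
\]
which is \eqref{eq:infdHK}.

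For the second bullet, assume $p\le 1+\max\{q,r\}$. Since the factor $\deg(w)\ge 1$ in the $r$-term only helps, we may assume the maximum is attained at $q$, so $p\le 1+q$ and $a_n\ge\sum_w\eta_w(n)^q/\sum_w\eta_w(n)^p$. A short case analysis on the signs of $p$ and $q$ shows $a_n\ge c\,(N_0+n)^{s}$ for some $c=c(|V|,p,q)>0$ and some exponent $s=s(p,q)\ge -1$: when $p\ge 1$ one necessarily has $q\ge p-1\ge 0$, and $\sum_w\eta_w^p\le M_n^{p-1}N_n\le N_n^p$ (up to an additive $|V|$ from $0^p=1$) together with $\sum_w\eta_w^q\ge M_n^q\ge(N_n/|V|)^q$ gives $s=q-p\ge -1$; when $p<1$ one uses $\sum_w\eta_w^p\le C_2N_n^{\max\{p,0\}}$ and $\sum_w\eta_w^q\ge |V|^{-|q|}N_n^q$, and checks in each sign case for $q$ (using $q\ge p-1$, hence $q>-1$ whenever $p<0$) that the resulting exponent is $\ge -1$ precisely because $p\le 1+q$. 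In all cases $b_n:=c(N_0+n)^s$ is a deterministic sequence with $\sum_n b_n=\infty$. Since $x\mapsto(1+x)^{-1}$ is decreasing and $\log(1+x)\ge(\log 2)\min\{x,1\}$,
\[
\frac{dH}{dK}(\eta_0^m)\ \le\ \prod_{n=0}^{m-1}\frac{1}{1+b_n}\ \le\ \exp\!\Big(-(\log 2)\sum_{n=0}^{m-1}\min\{b_n,1\}\Big)\ \xrightarrow[m\to\infty]{}\ 0
\]
uniformly over $\eta_0^m\in\mc A^+$, so choosing $m_0$ with the right-hand side $<\epsilon$ for $m\ge m_0$ yields \eqref{eq:supdHK}.

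I expect the only genuinely delicate point to be the case analysis in the second bullet: one must check that neither the fully spread-out configuration nor the one with all mass on a single vertex spoils the divergence of $\sum a_n$, and that the borderline exponent $s=-1$ is reached exactly at $p=1+\max\{q,r\}$; everything else is elementary bookkeeping. One should also note that the threshold $m_0$ in the second bullet may depend on $N_0=\sum_w H(0)_w$ through the rate of divergence of $\sum_n b_n$, whereas the constant $\delta$ in the first bullet is genuinely independent of $H(0)$, thanks to $N_0\ge 1$.
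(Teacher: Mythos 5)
Your argument follows the same strategy as the paper's own proof: write $\frac{dH}{dK}(\eta_0^m)=\prod_{n=0}^{m-1}(1+a_n)^{-1}$ and control $\sum_n a_n$ through the total particle count $N_n=N_0+n$, which grows by one per step on $\mc A^+$; the first bullet (summability when $p>1+\max\{q,r,0\}$) is carried out correctly in essentially the same way as the paper.

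There is, however, a genuine gap in your second bullet, and it is precisely in the sign case you flag as ``delicate''. You assert that $q\ge p-1$ and $p<0$ together force $q>-1$, but the implication goes the wrong way: $p<0$ gives $p-1<-1$, so $q$ may lie anywhere in $[p-1,-1)$. Concretely take $p=-5$, $q=r=-3$, so $p\le 1+\max\{q,r\}$, and let $H(0)$ be a single particle at a vertex $w_0$. Along the trajectory in $\mc A^+$ that piles every created particle on $w_0$, the convention $0^p=1$ gives $\sum_w\eta_w(n)^p=(1+n)^{-5}+(|V|-1)$, which stays of order $|V|-1$, while $\sum_w\eta_w(n)^q=(1+n)^{-3}$ and $\sum_w\deg(w)\eta_w(n)^r=\deg(w_0)(1+n)^{-3}$. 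Hence $a_n$ is of order $(1+n)^{-3}$, $\sum_n a_n<\infty$, and $\frac{dH}{dK}(\eta_0^m)$ converges to a strictly positive limit, so the supremum in \eqref{eq:supdHK} does not tend to zero. You should be aware that the published proof has the identical flaw: after \eqref{eq:boundpqr} the paper writes $a_k\ge C^{-2}N_\eta(k)^{q-p}$, which tacitly uses the upper bound $\sum_w\eta_w^p\le CN_\eta^p$; by \eqref{eq:boundpqr} that bound is only available for $p\ge 0$, and for $p<0$ one only obtains $a_k\ge C^{-2}N_\eta^q$, whose sum diverges only if $q\ge -1$, a condition not supplied by $p\le 1+q$ when $p<0$. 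So you are in good company, but the case $p<0$ with $\max\{q,r\}<-1$ is not closed by either your argument or the paper's; it would need an extra hypothesis (such as $p\ge 0$ or $\max\{q,r\}\ge -1$) or a genuinely different argument that does not rely on a uniform pointwise bound over all of $\mc A^+$. Your side remark that $m_0$ in \eqref{eq:supdHK} must in fact depend on $N_0=\sum_w H(0)_w$ is correct, and the paper's parameter list for $m_0$ is an understatement, though this is harmless in the application since $H(0)$ is fixed.
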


We postpone the proof of Lemma~\ref{lem:NCZR} to Section~\ref{sec:lem:NCZR}. Assuming Lemma~\ref{lem:NCZR} we finish the proof of Theorem~\ref{thm:NCZR}. 

\textit{First case:} assume $p>1+\max\{q,r,0\}.$

We define the following stopping time
\begin{equation}\label{eq:stoppNCZR}
\tau=\inf\{n\geq 0\,:\,H_0^n \notin \mc{A}^{+}\}.
\end{equation}
Note that on the event $\{\tau=\infty\}$ we actually have $H\in {\rm Creat}$. With this in mind our goal is to show that for some shift $\tau'$ of $\tau$ the event $\{\tau'=\infty\}$ holds with probability one with respect to the law of $H.$ 

We define recursively:
\begin{enumerate}
	\item $\tau_{0}=0$,
	\item If $\tau_{k}=\infty$ then  $\tau_{k+1}=\infty$,
	\subitem otherwise, $\tau_{k+1}=\tau\circ \mc{S}_{\tau_{k}}$ (see Section~\ref{sec:preliminaries}).
\end{enumerate}   

We want to show that  $\bb P_{H(0)}\big(\exists\, k\geq 1\,:\,\tau_{k}=\infty\big)=1.$ To that end we will show that $\sum_{k\geq 0}\bb P_{H(0)}(\tau_{k}<\infty)<\infty$ and the result then follows from Borel-Cantelli's lemma. 

Fix $m\in \bb N$. For simplicity, in the calculations below we identify $\big(K^{0}\big)_{0}^m$ with $K_{0}^{m}$ (here $K^0$ refers to the process defined above with $\tau=0$).  In the fourth line of \eqref{eq:step1} below we use Lemma~\ref{lem:NCZR}. Recall that the bound \eqref{eq:infdHK} is uniform in all configurations $\eta_0^m\in \mc{A}^{+}.$ In the fifth line of \eqref{eq:step1} we use that $K^0\in \mc{A}^{+}$ almost surely (cf. Theorem~\ref{thm:Rubin}). Then we obtain
\begin{equation}\label{eq:step1}
\begin{split}
\bb P_{H(0)}(\tau_{1}\geq m)&\geq \bb P_{H(0)}(H_{0}^m\in \mc{A}^{+})\\
&=\sum_{\eta_{0}^m \in \mc{A}^{+}}\bb P_{H(0)}(H_{0}^m=\eta_{0}^m)\\
&=\sum_{\eta_{0}^m \in \mc{A}^{+}}\frac{dH}{dK}(\eta_0^m)\bb P_{H(0)}\big(K_{0}^{m}=\eta_{0}^m\big)\\
&\geq \delta \bb P_{H(0)}\big(K_{0}^{m}\in \mc{A}^{+}\big)\\
&=\delta.
\end{split}
\end{equation}

In particular, sending $m\to \infty$, we obtain that $\bb P_{H(0)}(\tau_1<+\infty)<1-\delta.$ Note that all the estimates above are uniform in the starting configuration, i.e.,
\begin{equation*}
\sup\{\bb P_{\eta}(\tau_1<+\infty);\,\eta \in \bb N^V\}\leq 1-\delta.
\end{equation*} 
As induction hypothesis on $k$ assume that 
\begin{equation}\label{eq:borelcantelli}
\sup\{\bb P_{\eta}(\tau_{k}<+\infty);\,\eta \in \bb N^V\}\leq (1-\delta)^k.
\end{equation} 
Using the strong Markov property we derive
\begin{equation*}
\begin{split}
\bb P_{H(0)}(\tau_{k+1}<\infty)&=\bb P_{H(0)}(\tau_{k+1}<\infty,\,\tau_{k}<\infty)\\
&=\bb E_{H(0)}\big[\one_{\{\tau_{k}<\infty\}}\bb P_{H(\tau_k)}(\tau_1<\infty)\big]\\
&\leq \bb E_{H(0)}\big[\one_{\{\tau_{k}<\infty\}}(1-\delta)\big]\leq (1-\delta)^{k+1}.
\end{split}
\end{equation*}
Hence, by the Borel Cantelli lemma we deduce that
\begin{equation*}
\bb P_{H(0)}\big(\exists\, k\geq 1\,:\,\tau_{k}=\infty\big)=1\,.
\end{equation*}
 However, on the event $\{\exists\, k\in\bb N \text{ s.t. } \tau_k =\infty\}$ we have $H\in {\rm Creat}$. This shows that $\bb P({\rm Creat})=1$. To continue, we define
\begin{equation}\label{eq:leadmon}
\begin{split}
\mc B^{+}&= \left\{\eta\in\mc A^+:\, \exists\, w\in G \text{ s.t. } \forall n\in \bb N:\, \eta_v(n+1) =
\begin{cases}
\eta_v(n)+1,&\text{if }v=w\\
\eta_v(n), &\text{otherwise.} 
\end{cases}
\right\}\,.
\end{split}
\end{equation}
In the same manner as in~\eqref{eq:createA+} it holds that
\begin{equation*}
{\rm Mon}= \{\eta:\, \exists\, n\in\bb N \text{ s.t. }\eta_n^\infty\in \mc B^+\}.
\end{equation*}
Since $p>1$  we can change $\mc{A}^{+}$ by $\mc{B}^+$  in~\eqref{eq:step1} together with Theorem~\ref{thm:Rubin} to deduce that $\bb P({\rm Mon})=1$.

\textit{Second case:} assume that $p\leq 1+\max\{q,r\},$ and let $\epsilon>0$. By Lemma~\ref{lem:NCZR} there exists $m\in \bb N$ such that \eqref{eq:supdHK} holds. Therefore, for any $m\in \bb N$ we have the following bound
\begin{equation}\label{eq:upperNCZR}
\begin{split}
\bb P_{H(0)}(H_0^{m}\in \mc{A}^{+})
&=\sum_{\eta_{0}^m \in \mc{A}^{+}}\bb P_{H(0)}(H_{0}^m=\eta_{0}^m)\\
&=\sum_{\eta_{0}^m \in \mc{A}^{+}}\frac{dH}{dK}(\eta_0^m)\bb P_{H(0)}\big(K_{0}^{m}=\eta_{0}^m\big)\\
&\leq \epsilon \bb P_{H(0)}\big(K_{0}^{m}\in \mc{A}^{+}\big)=\epsilon.
\end{split}
\end{equation}

Therefore, sending $\epsilon\to 0$ we conclude that $
\bb P_{H(0)}(H_0^{\infty}\in \mc{A}^{+})=0.$

Using appropriate stopping times as in the first part of the proof allows us to conclude.
\subsection{Estimating the Radon-Nikodym derivative}
\label{sec:lem:NCZR}
In this section we prove Lemma~\ref{lem:NCZR}.

We start with a brief idea of the proof. The expression $\frac{dH}{dK}$ is a product and we can turn it into a exponential of a sum, using for example the inequality $1+x\leq e^{x}$. The sum obtained in this way can be controlled by using the total number of particles. It then turns out that the new sum will behave like $\sum_{n}1/n^{s}$ where $s=p-\max\{q,r\}$ and the convergence or divergence of this sum will provide the result.

Let $(\eta(n);\,n\in \bb N)\in \mc{A}^{+}$. From definition \eqref{def:dHdK} we obtain the expression 
\begin{equation}\label{eq:exprdHdK}
\begin{split}
\frac{dH}{dK}(\eta_{0}^{m})&=\prod_{k=0}^{m-1}\frac{\sum_{w \in G}\eta_{w}(k)^p}{\sum_{w \in G}[\eta_{w}(k)^{p}+\eta_{w}(k)^{q}+\deg(w)\sigma_{w}(k)^r]}\\
&=\prod_{k=0}^{m-1}\frac{1}{1+\frac{\sum_{w}\eta_{w}(k)^{q}}{\sum_{w}\eta_{w}(k)^p}+\frac{\sum_{w}\deg(w)\eta_{w}(k)^{r}}{\sum_{w}\eta_{w}(k)^p}}.
\end{split}
\end{equation}

Define $N_{\eta}(k):=\sum_{w\in G}\eta_w(k)$, which is the total number of particles of the configuration $\eta(k)\in \bb N^V.$ Observe that there exists a constant $C$ depending on $|V|$, $\Delta(G)$, $p$, $q$, and $r$ such that\begin{equation}\label{eq:boundpqr}
\begin{split}
C^{-1}N_{\eta}(k)^s&\leq \sum_{w}\eta_{w}(k)^{s}\leq C(N_{\eta}(k)^s\one_{s\geq 0} +\one_{s<0}),\,\text{ for } s=p,\,q,\,\text{ and }\\
C^{-1}N_{\eta}(k)^r&\leq \sum_{w}\deg(w)\eta_{w}(k)^{r}\leq C(N_{\eta}(k)^r \one_{r\geq 0} + \one_{r<0})\,.
\end{split}
\end{equation}

\textit{First case:} assume that $p>1+\max\{q,r,0\}.$
From the inequality $1/(1+x)\geq e^{-x}$, \eqref{eq:exprdHdK}, and \eqref{eq:boundpqr} we obtain that
\begin{equation}\label{eq:finalLB}
\begin{split}
\frac{dH}{dK}(\eta_{0}^{m})&\geq \exp\bigg(-\sum_{k=0}^{m-1}\frac{\sum_{w}\eta_{w}(k)^{q}}{\sum_{w}\eta_{w}(k)^p}-\sum_{k=0}^{m}\frac{\sum_{w}\deg(w)\eta_{w}(k)^{r}}{\sum_{w}\eta_{w}(k)^p}\bigg)\,.
\end{split}
\end{equation}
Applying~\eqref{eq:boundpqr} we see that
\begin{equation}
\sum_{k=0}^{m-1}\frac{\sum_{w}\eta_{w}(k)^{q}}{\sum_{w}\eta_{w}(k)^p} \leq \sum_{k=0}^{m-1}C^2\frac{N_\eta(k)^q \one_{q\geq 0} + \one_{q < 0}}{N_\eta(k)^p}\,
\end{equation}
and a similar estimate holds for the second term in the exponential in~\eqref{eq:finalLB}.
Observe that the total number of particles $N_{\eta}(k)$ increases by one unit at each time because $\eta \in \mc{A}^{+}$. Thus, the condition on $p$ is enough to conclude~\eqref{eq:infdHK}.

\textit{Second case:} assume that $ p\leq 1+\max\{q,r\}$. We will assume that $\max\{q,r\}=q$ because the proof for the other case is similar. We first treat the case that $\max\{q,r\}\leq p$.  Using \eqref{eq:boundpqr} in \eqref{eq:exprdHdK} we obtain that
\begin{equation*}
\frac{dH}{dK}(\eta_{0}^{m})\leq \prod_{k=0}^{m-1}\frac{1}{1+C^{-2}N_{\eta}(k)^{q-p}}.
\end{equation*}
Observe that for $x\in (0,2]$ it holds that $1/(1+x)\leq e^{-x/2}.$ Since $q-p\leq 0$ we have that $C^{-2}N_{\eta}(k)^{q-p}\leq 2$ (we can enlarge $C$ if needed). Therefore,
\begin{equation*}
\frac{dH}{dK}(\eta_{0}^{m})\leq \exp\bigg(-\frac{1}{2}\sum_{k=0}^{m-1}C^{-2}N_{\eta}(k)^{q-p}\bigg).
\end{equation*}
The sum on the RHS diverges since $p-q\leq 1,$ and as consequence the bound goes to $0$ as $m\to \infty.$
If on the other hand $p < q= \max\{q,r\}$, then we estimate
\begin{equation*}
\begin{split}
\prod_{k=0}^{m-1}\frac{1}{1+C^{-2}N_{\eta}(k)^{q-p}}
&= \exp\left\{-\sum_{k=0}^{m-1}\log(1+C^{-2}N_\eta(k)^{q-p})\right\}\\
&\leq \exp\left\{-\sum_{k=0}^{m-1}\log(C^{-2}N_\eta(k)^{q-p})\right\}\\
&=\exp\left\{-\sum_{k=0}^{m-1}\Big(\log(C^{-2}) + (q-p)\log(N_\eta(k))\Big)\right\}\,.
\end{split}
\end{equation*}
Again using that $k\mapsto N_\eta(k)$ is strictly monotone increasing, we see that the right hand side above converges to zero when $m$ tends to infinity. We therefore can conclude.

\section{The Ant random walk}
\label{sec:AntRW}
In this section we give a precise definition of the Ant RW. We end the section with the full statement of Theorem~\ref{thm:example2}.

We fix a graph $G=(V,E)$ and we assume that the graph is not a tree, i.e., it has at least one circuit. We further fix $N\in \bb N$, the total number of walkers.
\subsubsection*{Definition of the model.} The Ant RW is a process in discrete time
\begin{equation*}
(\vec{X}(n);\,n\in \bb N)=(X_i(n);\,i \in [N]\,,n\in \bb N)
\end{equation*}
which takes values in $V^N.$ For $n\geq 0,$ let $\mc F_n=\sigma(\vec{X}(m);\,0\leq m\leq n)$ be the natural filtration associated to the process $\vec X$. 

For any $(u,v)\in \vec{E}$ and $n\geq 0$, let
\begin{equation*}
\begin{split}
c^{(i)}_{n}(u,v):=&\sum_{m=1}^{n} \big(\one_{[(X_i(m-1),X_i(m))=(u,v)]}-\one_{[(X_i(m-1),X_i(m))=(v,u)]}\big).
\end{split}
\end{equation*}
In plain  words, $c^{(i)}_{n}(u,v)$ is  the number of times the walker $X_i$ jumped from $u$ to $v$ minus the number of times $X_i$ jumped from $v$ to $u$ in the time interval $\llbracket 0, n\rrbracket$. The (collective) crossing  number of the edge $(u,v)$ at time $n$ is defined by
\begin{equation*}
c_{n}(u,v)=\sum_{i=1}^{N}c^{(i)}_{n}(u,v).
\end{equation*}
Observe that at time $0$, $c_{0}(u,v)=0$ for all $(u,v)\in \vec{E}.$ 

We further fix two reinforcement functions $W_1,W_2:\bb N\to (0,+\infty)$, and we define $W:\bb Z\to (0,+\infty)$ via
\begin{equation*}
W(k) = \begin{cases}
W_1(k), &\text{if } k >0,\\
W_2(-k), &\text{if } k \leq 0,
\end{cases}
\end{equation*} 
and we emphasize that we assume $W_i(k)\neq 0$ for any $k\in \bb{N}$, $i=1,2$.

The dynamics of the Ant RW is described as follows. We choose an arbitrary (but fixed) distribution for the starting point $\vec{X}(0)  \in V^N$. Assume that the process is defined in the time interval $\llbracket 0, n-1\rrbracket$. At time $n$ we select an index $i\in[N]$ uniformly at random, independent of the past. Independent of the selection of $i$, conditioned on $\mc{F}_n$,  one has that $X_i(n)=u$ with probability 
\begin{align} \label{eq:transitions}
\frac{W(c_{n-1}(X_i(n-1),u))}{\displaystyle
	\sum_{ w:w\sim  X_i(n-1)} W(c_{n-1}(X_i(n-1),u))}.
\end{align}

Observe that the joint process $(\vec{X}(n),c_n(\cdot,\cdot);\,n\geq 0)$ is a discrete time Markov chain. We denote by $\bb P$ the law of the above process when started from the configuration $(\vec{X}(0),c_0(\cdot,\cdot))$.  In the future it will be convenient to condition the process on a given configuration $\xi=(\vec{X}(m),c_m(\cdot,\cdot))$, for a possible random time $m,$ and start afresh from there. In that case we write $\bb P_{\xi}$ for the law of the new process.


\subsubsection*{Long time behaviour}
We introduce more objects in order to fully state our main result. 

Let $V^{\infty}$ be the set of vertices $v\in V$ that are visited infinitely many  times by $\vec{X}$. Let $G^{\infty}=(V^{\infty},E^{\infty})$ be the induced graph by $V^{\infty}$, i.e., $\{u,v\}\in E^{\infty}$ if $\{u,v\}\in E$ and $u,v\in V^{\infty}$.

Fix a circuit $C$, and a representative $(u_0,u_1,\cdots,u_{\ell-1})$ of $C$ (recall from Section~\ref{sec:preliminaries} that $C$ actually denotes an equivalence class).

We introduce two events:
\begin{itemize}
	\item ${\rm Monot}$ is the event that there exists $m\in \bb N$ such that  $n\geq m\mapsto c_n(u,v)$ is a monotone function, for all $(u,v)\in \vec{E}$.

	\item ${\rm LocTrapp}$ is the event that there exists $m\in \bb N$, such that
	\subitem\rm{a)} the vector of crossing numbers $(c_n(u,v);\,n\geq m,\,(u,v)\in \vec{E})$ is monotone,
	\subitem\rm{b)} there exists $K\leq N$ and disjoint circuits $(C_i;\,i\in [K])$ such that
	\begin{equation*}
	V^{\infty}=\bigcup_{i=1}^{K}V(C_i)\,\,\,,\,\,
		E^{\infty}=\bigcup_{i=1}^{K}E(C_i)\,.
		\end{equation*}

\end{itemize}

 As an immediate consequence of the definition of {\rm LocTrapp}, the walkers are trapped in the circuits $C_i$ in the sense that they are not allowed to visit any other edges for large times. Furthermore, since the vector of crossing numbers is monotone, a group of walkers trapped in the circuit $C_i$ are forced to spin in the same direction.

Before proceeding we make some remarks about the above events. In particular, we compare them with their counterpart in the context of the balls-in-bins process (cf. Section~\ref{sec:balls-in-bins}).

 The event {\rm LocTrapp} resembles the event  {\rm Mon}.  Here the analogy is between creation of particles and crossing circuits. We observe also that {\rm LocTrapp} is a generalization of the trapping event one finds in \cite{EFR2019}.

Now we introduce the assumptions we will make about the reinforcement functions. Define $\mc P_{\leq}$ as the class of functions $f:\bb N\to (0,\infty)$ with the following property: for every $i\in \bb N$ there exists a constant $C_i=C_i(f)\in \bb R_+$ such that
\begin{equation}\label{eq:defF}
\begin{split}
\forall\,\, (a_1,\cdots,a_i)\in \bb N^{i}\,\,\,,\,\, \sum_{j=1}^if(a_j)\leq C_if\bigg(\sum_{j=1}^ia_j\bigg).
\end{split}
\end{equation}
Similarly, we define $\mc{P}_{\geq}$ as the class of functions that satisfy~\eqref{eq:defF} with a lower bound instead of upper bound.
For the next result we further assume that $W_2$ satisfies:
\begin{equation}\label{eq:extraW2}
\exists\, C=C(W_2)\,:\,\forall\,M\in \llbracket-N,+N\rrbracket,\,W_2(n+M)\leq CW_2(n),\,\forall\,n\in \bb N.
\end{equation} 
For $g:\bb N\to (0,+\infty)$ we write $\Sigma(g)=\sum_{n\geq 0}g(n)$.

\begin{theorem}\label{thm:PMonot0} Assume that $W_1\in \mc{P}_{\leq}$, $W_2\in \mc{P}_{\geq}$, $W_2$ satisfies \eqref{eq:extraW2}, and that $ \Sigma(W_2/W_1)=\infty$, then
\begin{equation*}
\bb P({\rm Monot})=0\,.
\end{equation*}
\end{theorem}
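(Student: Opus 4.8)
The plan is to show that the event ${\rm Monot}$ forces, from some random time onward, the crossing-number vector to be monotone in every coordinate, and that conditionally on this the Ant~RW behaves like a balls-in-bins process driven essentially by $W_1$; since $\Sigma(W_2/W_1)=\infty$ I will show that the Radon--Nikodym density of the true process against this reference, restricted to monotone trajectories, is small with probability tending to one, forcing $\bb P({\rm Monot})=0$. This mirrors the second case of Lemma~\ref{lem:NCZR}, so I will reuse the same architecture: build an auxiliary process, express a likelihood ratio as a product, pass to an exponential of a sum via $1/(1+x)\leq e^{-x/2}$ on a suitable range, and conclude by showing the relevant sum diverges.

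\textbf{Step 1: reduce to a single ``monotone-from-$0$'' event.}
As in the first case of the proof of Theorem~\ref{thm:NCZR}, it suffices to bound the probability that $(c_n(u,v);\,n\geq 0,\,(u,v)\in\vec E)$ is already monotone in every coordinate, because ${\rm Monot}$ is the union over shifts of this event and one can run the stopping-time/Borel--Cantelli argument (using $\bb P_\xi$ uniformly over configurations $\xi$) to reduce to time $0$. Let $\mc M$ denote the set of trajectories of $(\vec X,c)$ for which $n\mapsto c_n(u,v)$ is monotone for every directed edge. On $\mc M$, each edge is crossed, from the start, always in the same direction (or never); in particular the absolute value $|c_n(u,v)|$ is nondecreasing, and the set of edges that are crossed infinitely often contains at least one circuit once we are on $\mc M$ and on the event that infinitely many steps occur along $G^\infty$ — but I only need the monotonicity structure, not trapping.

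\textbf{Step 2: the reference process and the likelihood ratio.}
Define an auxiliary process $\widetilde X$ that agrees with $\vec X$ up to a finite stopping time and thereafter moves each selected walker across an incident edge $(u,v)$ with probability proportional to $W_1(|c(u,v)|+1)$ (i.e.\ as if every future crossing were ``positive''); this is a balls-in-bins--type dynamics at each vertex. On a finite window $[\,0,m\,]$ and a trajectory $\eta_0^m\in\mc M$, form $\frac{dX}{d\widetilde X}(\eta_0^m)$ as the ratio of path probabilities, exactly as in~\eqref{def:dHdK}. At each step $n$ the true transition weight for the chosen edge is $W(c_{n-1}(u,v))$ while the normalization runs over $W(c_{n-1}(u,w))$ for $w\sim u$; on $\mc M$ the edges at $u$ with negative crossing number contribute $W_2(-c_{n-1}(u,w))$ to that sum, and the point is that these negative-direction weights are \emph{not} suppressed relative to $W_1$ when $\Sigma(W_2/W_1)=\infty$. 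Using $W_1\in\mc P_\leq$, $W_2\in\mc P_\geq$ and~\eqref{eq:extraW2} to compare $W$ evaluated at the various crossing numbers with $W_1$, resp.\ $W_2$, evaluated at the total crossing number through the relevant vertex, one gets (as in~\eqref{eq:boundpqr}--\eqref{eq:exprdHdK}) a bound of the form
\begin{equation*}
\frac{dX}{d\widetilde X}(\eta_0^m)\leq \prod_{k=0}^{m-1}\frac{1}{1+c\,\dfrac{W_2(T_k)}{W_1(T_k)}},
\end{equation*}
where $T_k$ is (a lower bound for) the total number of crossings accumulated up to time $k$ along a circuit that is being traversed, and $c>0$ depends on $\Delta(G)$, $N$, and the constants from $\mc P_\le,\mc P_\ge$,~\eqref{eq:extraW2}.

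\textbf{Step 3: divergence of the sum and conclusion.}
Since on $\mc M$ some circuit is crossed at every step for which the selected walker sits on it, the counter $T_k$ increases without bound, and ranges over (a cofinite subset of) $\bb N$ up to bounded gaps, so that $\sum_{k\geq 0} W_2(T_k)/W_1(T_k)=\infty$ by $\Sigma(W_2/W_1)=\infty$ together with the uniform continuity-type control~\eqref{eq:extraW2} (which lets us replace $T_k$ by nearby integers). Applying $1/(1+x)\leq e^{-x/2}$ on the range where $c\,W_2(T_k)/W_1(T_k)\leq 2$ (enlarging $c$ if necessary, exactly as in the proof of Lemma~\ref{lem:NCZR}), we get
\begin{equation*}
\frac{dX}{d\widetilde X}(\eta_0^m)\leq \exp\Bigl(-\tfrac{c}{2}\sum_{k=0}^{m-1}\tfrac{W_2(T_k)}{W_1(T_k)}\Bigr)\xrightarrow[m\to\infty]{}0.
\end{equation*}
Then, as in~\eqref{eq:upperNCZR}, for any $\epsilon>0$ and $m$ large, $\bb P(X_0^m\in\mc M)=\sum_{\eta_0^m\in\mc M}\frac{dX}{d\widetilde X}(\eta_0^m)\,\bb P(\widetilde X_0^m=\eta_0^m)\leq\epsilon\,\bb P(\widetilde X_0^m\in\mc M)\leq\epsilon$, whence $\bb P(X_0^\infty\in\mc M)=0$; running the shift/Borel--Cantelli argument gives $\bb P({\rm Monot})=0$.

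\textbf{Main obstacle.}
The delicate point is Step~2: unlike the NCZR, where the ``total number of particles'' increases by exactly one per step and is shared cleanly across sites, here the relevant quantity $T_k$ is the crossing count along a circuit, which is only incremented when the uniformly chosen walker happens to be on that circuit, and the sum over incident edges at a vertex mixes positively- and negatively-oriented edges whose crossing numbers differ by at most $O(N)$. Making precise that on $\mc M$ there is a well-defined circuit whose crossing counter grows linearly along the subsequence of relevant steps, and that the summands $W_2(T_k)/W_1(T_k)$ inherit divergence from $\Sigma(W_2/W_1)=\infty$ despite these bounded shifts (this is exactly what~\eqref{eq:extraW2} is for, applied to $W_2$, together with $W_1\in\mc P_\le$ to keep $W_1(T_k)$ comparable to $W_1$ of nearby values) — this bookkeeping, rather than any new idea, is where the work lies, and it will presumably be packaged as a separate lemma analogous to Lemma~\ref{lem:NCZR}.
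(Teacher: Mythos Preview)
Your overall architecture is exactly that of the paper: build a reference process, compute the Radon--Nikodym derivative on monotone trajectories, bound it via $\mc P_{\leq},\mc P_{\geq}$ and~\eqref{eq:extraW2}, and show the resulting sum diverges. However, two of your choices diverge from the paper's and create a real gap.

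First, and most importantly, your control quantity $T_k$ (``crossing count along a circuit'') is the wrong object. The paper does not track a global circuit counter; instead, at each step it looks at the vertex $v_k^\pi$ where the jumping walker sits and uses the \emph{positive flow} $F_k^{+}(v_k^\pi)=\sum_{w:c_k(v_k^\pi,w)>0}c_k(v_k^\pi,w)$. The flow identity $F_k^+ - F_k^- = F_k \in \llbracket -N,N\rrbracket$ (not any circuit structure) is what lets one replace $W_2(F_k^-)$ by $W_2(F_k^+)$ via~\eqref{eq:extraW2}. One then splits the sum over $k$ by vertex: for each fixed $v$, on a trajectory in $\mc A_\tau^\uparrow$ the value $F_k^+(v)$ increases by exactly one each time a walker leaves $v$, so the restricted sum is a genuine tail of $\Sigma(W_2/W_1)$ and diverges. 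Your circuit-based $T_k$ has no such clean increment property (walkers on other circuits, multiple circuits sharing vertices, etc.), and your claim that $T_k$ ``ranges over a cofinite subset of $\bb N$ up to bounded gaps'' is not justified and in general false.

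Second, your reference process is suboptimal. Using $W_1(|c|+1)$ on \emph{all} incident edges does not produce the clean ratio~\eqref{eq:exprdXdY}; the paper instead restricts the reference walker to the directed graph $D_\tau$ of edges with positive crossing number (after the stopping time $\tau$ of Lemma~\ref{lem:taufinite}, which guarantees every vertex has an outgoing edge in $D_\tau$). With that choice the numerator of the true transition and the reference transition coincide, and the likelihood ratio at step $k$ is exactly $\big(1+\Upsilon^-/\Upsilon^+\big)^{-1}$, which is what feeds into the flow estimate above. Your version would require additional comparison of $W_1(|c|+1)$ with $W_1(c)$ on the chosen edge and with $W_2(-c)$ on the negatively oriented ones, which is extra work you have not done.
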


We prove Theorem~\ref{thm:PMonot0} in Section~\ref{sec:proofAntRW}. We show in Proposition~\ref{prop:examples} examples of functions in the classes $\mc P_{\geq}$ and $\mc P_{\leq}$ and of those satisfying~\eqref{eq:extraW2}. 

Next we will state our main theorem for the regime $\Sigma(W_2/W_1)<\infty$. In this case our result holds under a more general condition on the pair $(W_1,W_2)$. If in particular, $W_1\in\mc{P}_{\geq}$ and $W_2\in\mc{P}_{\leq}$, then this condition will be satisfied.

We proceed to state the general condition. Given a vector $\phi\in\bb Z^d$ we define the two quantities
\begin{equation}\label{eq:Upsilon}
\Upsilon^{+}(\phi) = \sum_{i: \phi(i) >0} W_1(\phi(i)),\quad \text{and}\quad 
\Upsilon^{-}(\phi) = \sum_{i: \phi(i) \leq 0} W_2(-\phi(i)).
\end{equation}

These quantities are related to our model due the following observation. Assume that at a given time $n$, the walker $X_k$ is on a vertex $v$ of degree $d$ and that $(c_n(v,w))_{w\sim v}= (\phi_n(i))_{i\in[d]} $. Then, the probability of $X_k$ to walk over an edge with a positive crossing number in the next step is precisely given by
\begin{equation}\label{eq:Upsilonratio}
\frac{1}{1+\frac{\Upsilon^{-}(\phi_n)}{\Upsilon^{+}(\phi_n)}}.
\end{equation}
It is therefore natural to impose a control on the ratio $\frac{\Upsilon^{-}(\phi_n)}{\Upsilon^{+}(\phi_n)}$. 

As it turns out we do not need to control all possible vector functions of the form $\{(\phi_n)_{n\geq 1}:\, \phi_n\in \bb Z^d\}$ but only those that arise from a vector of crossing numbers. 



Recall the definition of $\Delta=\Delta(G)$ in \eqref{eq:maxmindeg}.
\begin{definition}\label{def:Phi}
	We denote by $\Phi$ the set of all functions $(\phi_n)_{n\geq 0}$ satisfying
	\begin{itemize}
		\item[(1)] There is a $d\in [1, \Delta]$  such that for all $n\geq 0$ we have that $\phi_n\in \bb Z^d$.  
		\item[(2)] 
		\begin{equation*}
		\sum_{j=1}^{d} \phi_n(j) \in \llbracket-N,+N\rrbracket \qquad (\mathrm{flow\, property})\,.
		\end{equation*}
		\item[(3)] For any time $n\geq 0$ there are $j_1,j_2\in[d]$ such that $\phi_{n}(j_1)>0$, and $\phi_{n+1}(j_1)=\phi_{n}(j_1)+1$ as well as $\phi_{n}(j_2)\leq 0$ and $\phi_{n+1}(j_2)= \phi_{n}(j_2)-1$. Moreover, for all $j\notin\{j_1,j_2\}$ one has that $\phi_{n+1}(j) = \phi_{n}(j)$.
	\end{itemize}
\end{definition}
Here, the second item reflects Equation~\eqref{def:Totalflow}.  The last item reflects the fact that exiting a vertex $v$ through the edge $(v,w)$ increases $c_n(v,w)$ by one, whereas entering the vertex $v$ through the edge $(w,v)$ diminishes $c_n(v,w)$ by one, and the crossing numbers associated to all other edges going out of $v$ remain constant. 

Our general condition on the pair $(W_1,W_2)$ is that
\begin{equation}\label{eq:A2}
\sup_{\phi\in\Phi}\sum_{n=0}^{+\infty}  \frac{\Upsilon^{-}(\phi_n)}{\Upsilon^{+}(\phi_n)}<\infty.
\end{equation}

At first sight it seems difficult to verify that \eqref{eq:A2} holds in practice. However, in Proposition~\ref{prop:examples}, we show that in many cases of interest~\eqref{eq:A2} actually is satisfied.

\begin{theorem}\label{theo:very-strong} Assume that \eqref{eq:A2} holds. Without further assumptions on $W_1$ and $W_2$ we have that $\bb P({\rm Monot})=1$. As a consequence of \eqref{eq:A2}  we have that  
	 $\Sigma(W_1^{-1})<\infty$ and $\bb P({\rm LocTrapp})=1$.
\end{theorem}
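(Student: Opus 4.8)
\textit{Strategy and setup.} I would reproduce, for the Ant RW, the scheme that proves Theorem~\ref{thm:NCZR}: compare $\vec X$ with a reference model which is, by construction, a system of balls-in-bins processes; bound from below a Radon--Nikodym derivative between the two laws, uniformly on a well chosen set of trajectories; and conclude by a Borel--Cantelli argument along iterated shifts of a stopping time. It is convenient first to recast the events. Let $\mc A$ be the set of trajectories along which every edge is traversed in at most one direction --- equivalently, along which $n\mapsto c_n(u,v)$ is monotone from time $0$ for every $(u,v)\in\vec E$ --- so that ${\rm Monot}=\{\exists\, m\in\bb N:\ \vec X_m^\infty\in\mc A\}$. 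The consequence $\Sigma(W_1^{-1})<\infty$ is extracted from \eqref{eq:A2} by inserting concrete $\phi\in\Phi$: already for $d=2$ the sequence $\phi_n=(n+1,-n)$ lies in $\Phi$ and has $\Upsilon^{+}(\phi_n)=W_1(n+1)$, $\Upsilon^{-}(\phi_n)=W_2(n)$, while spreading the negative coordinate over $\Delta-1$ slots and keeping one growing positive coordinate lets one read off the summability of $W_1^{-1}$ from \eqref{eq:A2}, up to a fixed $W_2$-dependent factor.

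\textit{The comparison.} For a finite stopping time $\tau$, let $K^{\tau}$ agree with $(\vec X(n),c_n)$ on $\llbracket0,\tau\rrbracket$ and, for $n>\tau$, let the uniformly selected walker at a vertex $v$ move to a neighbour $w$ with probability proportional to $W(c_n(v,w))$ \emph{among those $w$ with $c_n(v,w)\ge 0$ only}; then $K^{\tau}$ never crosses an edge against its current orientation, so $K^{\tau}\in\mc A$ almost surely, and restricted to the outgoing edges of a fixed vertex $K^{\tau}$ is a balls-in-bins process whose feedback equals $W_1$ off $0$. The crux is the analogue of Lemma~\ref{lem:NCZR}: with $\frac{d\vec X}{dK}(\eta_0^m):=\bb P(\vec X_0^m=\eta_0^m)/\bb P((K^{0})_0^m=\eta_0^m)$, there is $\delta=\delta(G,N,W_1,W_2)>0$ with $\frac{d\vec X}{dK}(\eta_0^m)\ge\delta$ for all $m$ and all $\eta_0^m\in\mc A$. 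As in \eqref{eq:exprdHdK} this derivative is a product over the steps $k=0,\dots,m-1$ (the uniform choice of the walker's index cancels), and along a trajectory of $\mc A$ the moving walker uses an edge of positive crossing number at every step except the finitely many ``orienting'' steps that commit the sign of a hitherto unused edge. A positive-crossing step at a vertex $v$, with local crossing-number vector $\phi^{(k)}=(c_k(v,w))_{w\sim v}$, contributes $(1+\Upsilon^{-}(\phi^{(k)})/\Upsilon^{+}(\phi^{(k)}))^{-1}\ge e^{-\Upsilon^{-}(\phi^{(k)})/\Upsilon^{+}(\phi^{(k)})}$ --- the Ant RW transition \eqref{eq:transitions} is $W_1(\cdot)/(\Upsilon^{+}+\Upsilon^{-})$, the reference one $W_1(\cdot)/\Upsilon^{+}$ --- and grouping the resulting sum by vertex, the configurations seen at each $v$ trace out, apart from the finitely many orienting steps, a sequence in $\Phi$ (item (2) of Definition~\ref{def:Phi} is the flow property and item (3) the $\pm1$ update), so each such sum is at most $\sup_{\phi\in\Phi}\sum_n\Upsilon^{-}(\phi_n)/\Upsilon^{+}(\phi_n)<\infty$ by \eqref{eq:A2}; summing over the $|V|$ vertices bounds the exponent uniformly in $m$ and $\eta_0^m$. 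Each orienting step contributes a factor bounded below, using $\Upsilon^{-}/\Upsilon^{+}\le\sup_{\Phi}(\cdots)$ when $v$ already has a positive outgoing edge and otherwise the flow property $\sum_w\phi^{(k)}(w)\in\llbracket-N,N\rrbracket$, which then forces the excluded weight to be bounded by a constant.

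\textit{Conclusion.} With this estimate, the Borel--Cantelli scheme of \eqref{eq:step1}--\eqref{eq:borelcantelli}, run with $\tau=\inf\{n:\vec X_0^n\notin\mc A\}$ and its iterated shifts, gives $\bb P_\xi(\tau_1\ge m)\ge\bb P_\xi(\vec X_0^m\in\mc A)\ge\delta\,\bb P_\xi((K^{0})_0^m\in\mc A)=\delta$ uniformly in the configuration $\xi$, hence $\bb P(\exists\, k:\tau_k=\infty)=1$, i.e.\ $\bb P({\rm Monot})=1$. Finally, on ${\rm Monot}$ there is an $m$ after which every walker follows a fixed orientation, and then the sequence of exits at each $v\in V^\infty$ is a balls-in-bins process with feedback $W_1$ perturbed only, through its normalisation, by the quantity $\Upsilon^{-}(\phi_n)/\Upsilon^{+}(\phi_n)$, which is summable by \eqref{eq:A2}; such a perturbed urn is mutually absolutely continuous with the genuine one, which monopolises since $\Sigma(W_1^{-1})<\infty$ (Theorem~\ref{thm:Rubin}). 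Hence each $v\in V^\infty$ eventually uses a single outgoing edge, the infinitely used directed edges form a permutation of the finite set $V^\infty$ --- that is, a vertex-disjoint union of at most $N$ directed circuits --- and therefore $\bb P({\rm LocTrapp})=1$.

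\textit{Main obstacle.} I expect the two delicate points to be: (i) making $K^{\tau}$ genuinely well defined --- a walker may momentarily sit at a vertex all of whose outgoing edges carry negative crossing numbers, and one must argue that this is transient (it would require at least $\deg(v)$ walkers to accumulate at $v$, which is impossible when $\deg(v)>N$) or else absorb such steps into the error; and (ii) the clean identification, at each vertex, of the observed crossing-number configurations with bona fide elements of $\Phi$, together with the uniform-in-$\xi$ lower bound on the finitely many orienting factors --- this is the second place where the full force of \eqref{eq:A2} is used.
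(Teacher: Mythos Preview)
Your overall strategy---Radon--Nikodym comparison with a reference process plus Borel--Cantelli along iterated stopping times---is the paper's strategy, and your treatment of the product of one-step ratios via $\Upsilon^{-}/\Upsilon^{+}$ and the grouping by vertex into elements of $\Phi$ is exactly what the paper does in its Lemma~\ref{lem:AntRW}. The difficulty you flag in your ``Main obstacle~(i)'' is, however, a genuine gap, and your proposed patch does not close it.

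Concretely: your reference process $K^{\tau}$, started at time $0$, can get stuck. Take $G$ a triangle with $N=2$ walkers both starting at $a$; force walker~$1$ through $a\to b$, walker~$2$ through $a\to c$, then walker~$1$ through $b\to c$. Walker~$2$ now sits at $c$ with $c_3(c,a)=-1$ and $c_3(c,b)=-1$: every outgoing edge is strictly negative, so your $K^{0}$ has no legal move. Your remark that this is impossible when $\deg(v)>N$ is correct (by the flow bound $\sum_w c_n(v,w)\ge -N$), but it is of no help when $\deg(v)\le N$, and absorbing such configurations ``into the error'' is not possible because the process is simply undefined there. The identity $\bb P((K^{0})_0^m\in\mc A)=1$, on which your Borel--Cantelli step rests, therefore fails.

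The paper resolves this by \emph{not} starting the comparison at time $0$. It introduces the stopping time $\tau=\inf\{n:\ {\rm End}_n\subset S_n\}$, where $S_n$ is the set of vertices incident to an edge with $|c_n|\ge N+1$, proves $\tau<\infty$ a.s.\ (Lemma~\ref{lem:taufinite}), and then \emph{freezes} the directed graph $D_\tau$ consisting of the edges with $c_\tau>0$. The flow property together with the definition of $\tau$ forces $\deg^{\rm out}_{D_\tau}(v)\ge 1$ for every $v\in D_\tau$, so the reference walk $\vec{Y}^{\tau}$ on $D_\tau$ is well defined for all times and satisfies $\vec{Y}^{\tau}\in\mc A^{\uparrow}_\tau$ automatically. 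This replaces your dynamically updated set of ``nonnegative'' edges by a fixed directed graph chosen at a favourable moment, and is the device you are missing.

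A second, smaller difference: for ${\rm LocTrapp}$ the paper does not argue, as you do, that on ${\rm Monot}$ the exits from each vertex form a perturbed urn absolutely continuous with a genuine $W_1$-urn. Instead it shows directly (Lemma~\ref{lemma:suppYn}, via Rubin's construction inside $D_\tau$) that the reference process $\vec{Y}^{\tau}$ itself localises on disjoint circuits when $\Sigma(W_1^{-1})<\infty$, and then reruns the \emph{same} Radon--Nikodym lower bound on the smaller set $\mc C^{\uparrow}_\tau\subset\mc A^{\uparrow}_\tau$. This avoids the absolute-continuity argument you sketch and keeps the proof of ${\rm LocTrapp}$ structurally identical to that of ${\rm Monot}$.
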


The next result gives examples of functions $W_1$ and $W_2$ that satisfy Assumptions~\eqref{eq:defF},~\eqref{eq:extraW2} and~\eqref{eq:A2}.
\begin{proposition}\label{prop:examples} The following holds:
	\begin{itemize}
		\item[\rm{(a)}] Assume that $\Sigma(W_1^{-1})< \infty$, and that $W_2$ is non-increasing, then $(W_1,W_2)$ satisfies~\eqref{eq:A2}.
		\item[\rm{(b)}]  Let $p, q \in \bb R_+.$ Define $W_1, W_2:\bb N\to \bb R_+$ via
		\begin{equation*}
		W_i(k)=\begin{cases}k^p+1 & \text{if }i = 1,\\
		k^q+1 &\text{if } i= 2.\end{cases}\,
		\end{equation*}
		Then $W_1$ and $W_2$ belong to $\mc{P}_{\leq}\cap \mc{P}_{\geq}$ and satisfy~\eqref{eq:extraW2}. If $p-q>1$, then $(W_1,W_2)$ satisfies~\eqref{eq:A2}. Moreover, the same conclusions hold if $W_1$ is a polynomial of degree $p$ and $W_2$ is a polynomial of degree $q$. 
		\item[\rm{(c)}]For fixed $p,q\in \bb R_+$ define $\log^+ k= log\, k\vee 1$, and let
		\begin{equation*}
		W_1(k) = k (\log^+ k)^{p}+1,\quad\text{and}\quad W_2(k) = (\log^+ k)^{q} \,.
		\end{equation*}
		If $p-q>1$, then $(W_1,W_2)$ satisfies~\eqref{eq:A2}. 
		\item[\rm{(d)}] If $W_1\in \mc{P}_{\geq}$, $W_2\in \mc{P}_{\leq}$, $W_2$ satisfies \eqref{eq:extraW2}, and $ \Sigma(W_2/W_1)<\infty$, then \eqref{eq:A2} holds.
	\item[\rm{(e)}] If $W_1(k)=e^{\beta k}$ and $W_2(k)\leq e^{\alpha k}$ with $\alpha,\beta \in (0,\infty)$ and $\beta>\Delta(G)\alpha$. In this case the pair $(W_1,W_2)$ satisfies \eqref{eq:A2}.
	\end{itemize}
\end{proposition}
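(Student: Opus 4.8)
The plan is to verify each item essentially by elementary estimates, the key point being to control the ratio $\Upsilon^{-}(\phi_n)/\Upsilon^{+}(\phi_n)$ along trajectories $\phi\in\Phi$, and to exploit the structural properties (1)--(3) of Definition~\ref{def:Phi}. The crucial observation to use throughout is that by item~(3) of Definition~\ref{def:Phi}, at each step exactly one coordinate $\phi_n(j_1)>0$ increases by one, so the coordinate that is ``pushed up'' grows without bound along the subsequence of times at which it is selected; combined with the flow property~(2), which bounds $\sum_j\phi_n(j)$ in absolute value by $N$, this forces $\Upsilon^{+}(\phi_n)$ to grow while $\Upsilon^{-}(\phi_n)$ stays controlled. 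I would organize the proof item by item.

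For \textbf{(b)}: first check $W_i\in\mc P_{\leq}\cap\mc P_{\geq}$ and \eqref{eq:extraW2} by direct computation — superadditivity/subadditivity of $k\mapsto k^p+1$ up to multiplicative constants, and $(n+M)^q\asymp n^q$ for bounded $M$; these are the routine estimates alluded to in the brief idea. Then, for the main claim, I would use that along $\phi\in\Phi$ there is a coordinate (the running ``maximum'' coordinate) whose value at time $n$ is comparable to the number of up-steps taken so far; since at least one of the (at most $\Delta$) positive coordinates has received at least $n/\Delta - O(1)$ increments, $\Upsilon^{+}(\phi_n)\gtrsim (n/\Delta)^p$, while $\Upsilon^{-}(\phi_n)\lesssim \Delta\cdot(\text{bound on }|\phi_n(j)|)^q \lesssim n^q$ using the flow property to bound each negative coordinate by $N$ plus the total number of down-steps, which again is $O(n)$. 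Hence the summand is $O(n^{q-p})$ and $\sum_n n^{q-p}<\infty$ iff $p-q>1$. The polynomial case follows since a degree-$p$ polynomial is sandwiched between constant multiples of $k^p+1$ for $k$ large.

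For \textbf{(a)}: here $\Sigma(W_1^{-1})<\infty$ already gives, by monotonicity considerations, that $W_1(k)\to\infty$ fast; using that $W_2$ is non-increasing, $\Upsilon^{-}(\phi_n)\leq \Delta\, W_2(0)$ is uniformly bounded, while $\Upsilon^{+}(\phi_n)\geq W_1(M_n)$ where $M_n=\max_i\phi_n(i)\vee 1$ is non-decreasing in $n$ and increases by one each time an up-step hits the current leading coordinate; since at most $\Delta$ coordinates are positive, $M_n$ increases at least once every $\Delta$ steps, so $\sum_n 1/W_1(M_n)\lesssim \Delta\sum_k 1/W_1(k)<\infty$. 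For \textbf{(d)}: rewrite $\Upsilon^{-}/\Upsilon^{+}$ using $W_2\in\mc P_{\leq}$ to bound $\Upsilon^{-}(\phi_n)\leq C\,W_2(\sum_{i:\phi_n(i)\leq 0}(-\phi_n(i)))$ and $W_1\in\mc P_{\geq}$ to bound $\Upsilon^{+}(\phi_n)\geq c\,W_1(M_n)$ where $M_n$ is as above; combining with the flow property (which ties the magnitude of the negative part to that of the positive part up to $N$) and \eqref{eq:extraW2} reduces the sum to $\sum_k W_2(k)/W_1(k)=\Sigma(W_2/W_1)<\infty$. Item \textbf{(c)} is the same argument as (b) with $k^p$ replaced by $k(\log^+k)^p$: one gets summand $O\big((\log n)^{q-p}/n\big)$, wait—more precisely $\Upsilon^{-}/\Upsilon^{+}\lesssim (\log^+ n)^q/(n(\log^+ n)^p)=1/(n(\log^+n)^{p-q})$, which is summable iff $p-q>1$. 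Item \textbf{(e)} is the exponential case: $\Upsilon^{+}(\phi_n)\geq e^{\beta M_n}$ and $\Upsilon^{-}(\phi_n)\leq \Delta e^{\alpha \max_i(-\phi_n(i))}\leq \Delta e^{\alpha(M_n+N)}$ by the flow property ($\max$ of negative part is at most $M_n(\Delta-1)+N\leq \Delta M_n+N$, absorbing constants), so the summand is $\lesssim e^{-(\beta-\Delta\alpha)M_n}$; since $M_n$ increases at least once every $\Delta$ steps and $\beta>\Delta\alpha$, the geometric series converges.

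The \textbf{main obstacle} I anticipate is making the bound ``some positive coordinate of $\phi_n$ is $\gtrsim n/\Delta$'' fully rigorous: one must argue that among the at most $\Delta$ coordinates, the one selected as $j_1$ in item~(3) cannot be a ``fresh'' small coordinate too often, because once a coordinate becomes positive and keeps getting incremented it is the natural candidate, but a coordinate can also be driven negative and later return — so a clean bookkeeping of up-steps versus the flow constraint $|\sum_j\phi_n(j)|\leq N$ is needed. The cleanest route is to note that the total number of up-steps by time $n$ is exactly $n$ (one per time step, by item~(3)), these are distributed among at most $\Delta$ coordinates, so by pigeonhole one coordinate receives $\geq n/\Delta$ up-steps; its value is (up-steps received) $-$ (down-steps received), and the number of down-steps received by \emph{that} coordinate is controlled because the total down-steps is also $n$ and, more importantly, the flow property prevents any coordinate from being simultaneously the recipient of many more down-steps than the others without violating $\sum_j\phi_n(j)\geq -N$. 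Carefully, $M_n\geq \frac1\Delta\sum_{i:\phi_n(i)>0}\phi_n(i)\geq \frac1\Delta\big(\sum_j\phi_n(j) + \sum_{i:\phi_n(i)\leq 0}(-\phi_n(i))\big)\geq \frac1\Delta\big(-N + (\text{total down-steps not yet cancelled})\big)$, and since total up = total down up to the residual bounded by $N$, one extracts $M_n\geq (n - C_N)/\Delta$ — wait, this needs the number of ``wasted'' up-steps (those cancelling a later down-step on a coordinate that stays $\leq 0$) to be accounted for; I would handle this by instead tracking $S_n^+:=\sum_{i:\phi_n(i)>0}\phi_n(i)$ and showing $S_n^+ \geq \frac{n}{2} - N$ or similar via the flow identity, then $M_n\geq S_n^+/\Delta$. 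Once this growth lower bound on $M_n$ is established, every item reduces to a one-line summability check.
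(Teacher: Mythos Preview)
Your overall strategy is right, but you are making the argument much harder than it needs to be, and in the process you state a claim that is actually false. The key simplification you are missing is this: by item~(3) of Definition~\ref{def:Phi}, at each step the incremented coordinate $j_1$ satisfies $\phi_n(j_1)>0$ and stays positive, while the decremented coordinate $j_2$ satisfies $\phi_n(j_2)\leq 0$ and stays non-positive. Hence the \emph{set} $\{i:\phi_n(i)>0\}$ is invariant in $n$, and consequently
\[
\mc F_+(\phi_n):=\sum_{i:\phi_n(i)>0}\phi_n(i)
\]
satisfies $\mc F_+(\phi_{n+1})=\mc F_+(\phi_n)+1$ \emph{exactly}. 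This single observation dissolves your ``main obstacle'' entirely: there is no need for pigeonhole, no bookkeeping of up-steps versus down-steps, no tracking of $M_n$. For parts (b)--(e) the paper simply bounds $\Upsilon^-(\phi_n)/\Upsilon^+(\phi_n)$ in terms of $\mc F_+(\phi_n)$ (using \eqref{eq:equiv}-type inequalities, or convexity for~(e)) and then sums a series in the deterministic, strictly increasing variable $\mc F_+(\phi_n)=\mc F_+(\phi_0)+n$.

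Your claim that ``$M_n$ increases at least once every $\Delta$ steps'' is false: with two positive coordinates starting at $(k,1)$, the second can be incremented $k-1$ times while $M_n$ stays at $k$. So the route through $M_n$ does not yield $\sum_n 1/W_1(M_n)\lesssim \Delta\,\Sigma(W_1^{-1})$ directly. For part~(a) the paper instead bounds $\Upsilon^+(\phi_n)\geq W_1(\phi_n(i(n)))$ where $i(n)$ is the incremented coordinate, and observes that for each fixed positive coordinate $j$ the sequence $(\phi_n(j))_{n:\,i(n)=j}$ is strictly increasing, so $|\{n:\phi_n(i(n))=k\}|\leq d$; this gives $\sum_n 1/W_1(\phi_n(i(n)))\leq d\,\Sigma(W_1^{-1})$. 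Your bound in~(e) on $\Upsilon^-$ is also garbled: the flow property gives $-\mc F_-(\phi_n)\leq \mc F_+(\phi_n)+N$, not a bound on the individual maximum negative coordinate in terms of $M_n$; the paper combines this with Jensen's inequality $\Upsilon^+(\phi_n)\geq d\exp\big(\tfrac{\beta}{d}\mc F_+(\phi_n)\big)$, which is where the factor $\Delta(G)$ in the hypothesis $\beta>\Delta(G)\alpha$ arises.
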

\begin{remark} \rm We expect that Theorem~\ref{theo:very-strong} holds for the functions in item (e) of Proposition~\ref{prop:examples}  with $\beta$ and $\alpha$ only satisfying $\beta>\alpha$. 
\end{remark}
We present the proof of Proposition~\ref{prop:examples} in Appendix~\ref{sec:proofexamples}.

\subsection{A remark about the result for an infinite graph}

Let $G$ be an infinite graph. The goal of this short section is to explain which assumptions one additionally needs to impose on $G$ in order to extend our main results, by combining the results of the current articles with the techniques of~\cite{EFR2019}.

Firstly, one needs to impose that $G$ has a bounded degree.

Secondly, one needs to assume that $G$ is sufficiently connected and that it has sufficiently many circuits. More precisely, one needs to impose that there exists a sequence $(B_n)_{n \in \bb N}$ of subgraphs of $G$ such that
\begin{enumerate}
	\item $B_n\subset B_{n+1},$
	\item $G=\cup_{n\in \bb N}B_n$,
	\item $B_{n+1}\setminus B_n$ is connected and it has at least one circuit.
\end{enumerate} 
Call $\mc{G}$ the class of graphs satisfying the above assumptions. It is simple to check that $\bb Z^d\in \mc{G}$ with $d\geq 2$ and the usual graph structure.

Then the following is true:
\begin{center}
\textit{Assume that $G\in \mc G$, \eqref{eq:A2} holds  and $\Sigma(W_1^{-1})<+\infty.$\\ Then $\bb P({\rm LocTrapp})=1.$
}
\end{center}

\section{Proofs for the Ant RW}
\label{sec:proofAntRW}

The strategy of the proof is identical to the proof of Theorem~\ref{thm:NCZR} (cf. Section~\ref{sec:NCZR}). We construct an auxiliary multi particle random walk $\vec{Y}^\tau$ associated to $\vec{X}$ and to a finite stopping time $\tau$. Depending on the choices of the reinforcement functions $W_1$ and $W_2$ it is possible to compare $\vec{Y}^\tau$ and $\vec{X}$ in such a way that both processes have the same support. An analogue of Theorem~\ref{thm:Rubin} will provide a description of the trajectories of $\vec{Y}^{\tau}$ and from this we deduce the desired result.

Our first step in the construction of $\vec{Y}^{\tau}$ is to define a suitable oriented graph $D_{\tau}=(V_{\tau},\vec{E}_{\tau})$ such that $(v,w)\in \vec{E}_{\tau}$ if and only if $c_{\tau}(v,w)>0$. Eventually  $\vec{Y}^{\tau}$ will take values in $D_{\tau}$.

For each $n\in \bb N$ we define the set of end points of $\vec{X}$ at time $n$ as
\begin{equation}\label{eq:defEnd}
{\rm End}_n=\{X_j(n)\,:\,j\in [N]\}\,.
\end{equation}
Motivated by the flow property following Equation~\eqref{def:Totalflow}, which we will introduce in the forthcoming Section~\ref{sec:lem:taufinite}, we define the following set of vertices for each time $n\in \bb N:$
\begin{equation}\label{eq:defS}
S_n=\{v\in V\,:\,\exists\,w\in V\,\text{ s.t. }|c_n(v,w)|\geq N+1\}\,.
\end{equation}
Let $\tau$ be the first time such that ${\rm End}_n\subset S_n.$ Further, let $\mc{T}_{f}$ be the collection of stopping times that are finite almost surely with respect to the law of $\vec{X}$. For each $\rho\in \mc{T}_{f}$ we write $\tau_{\rho}=\tau\circ\mc{S}_{\rho},$ where we recall that $\mc{S}$ denotes the shift operator.

Provided that $\tau$ is finite we define $D_{\tau}=(V_{\tau},\vec{E}_{\tau})$ as the directed graph given by:
\begin{equation}\label{def:D}
\begin{split}
V_{\tau}=& {\rm End}_{\tau}\cup\{v\in V:\,v \notin {\rm End}_{\tau},\text{ and }\exists\,w\sim_G v\,;\,c_{\tau}(v,w)\neq 0\} \,,\\
\vec{E}_{\tau}=&\{(v,w)\in V_{\tau}^2:\,c_{\tau}(v,w)>0\}\,.
\end{split}
\end{equation}
If $(v,w)\in\vec{E}_{\tau}$, then we also write $v \sim_{D_{\tau}} w$. 
Observe that if $\tau$ is finite, then by definition $X_j(\tau)\in S_{\tau}$ for all $j\in [N].$

\begin{lemma}\label{lem:taufinite} Assume that $G$ has at least one circuit. Let $\rho \in \mc{T}_f.$ Then $\tau_{\rho}$ is finite with probability one with respect to the law of $\vec{X}.$ Furthermore, $\deg_{D_{\tau_{\rho}}}^{\rm out}(v)\geq 1 \,\forall\,v\in V_{\tau}$. In particular, each connected component of $D_{\tau_{\rho}}$ has at least one circuit.
\end{lemma}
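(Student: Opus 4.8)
The plan is to prove the three assertions in order: first that $\tau_\rho$ is almost surely finite, then that every vertex of $V_{\tau_\rho}$ has positive out-degree in $D_{\tau_\rho}$, and finally that this forces every connected component to contain a circuit. The last two parts are essentially combinatorial once the first is established, so the bulk of the work is the finiteness statement.

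For finiteness, by the strong Markov property it suffices (after replacing the starting configuration via $\mathcal S_\rho$) to show $\tau$ itself is a.s.\ finite, i.e., that there is a.s.\ a time $n$ at which every walker sits on a vertex $v$ for which some incident edge has $|c_n(v,w)|\geq N+1$. The key structural fact to invoke is the \emph{flow property} following Equation~\eqref{def:Totalflow}: at every time $n$, for each vertex $v$ the sum $\sum_{w\sim v} c_n(v,w)$ equals the net number of walkers that have left $v$, hence lies in $\llbracket -N, +N\rrbracket$; in particular if $v$ is visited and some $|c_n(v,w)|$ is large then the crossing numbers cannot all stay small. I would argue by contradiction: on the event $\{\tau=\infty\}$, at all times at least one walker sits outside $S_n$, i.e., on a vertex all of whose incident edges have crossing number bounded by $N$ in absolute value. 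Since $G$ is finite, some vertex $v_0\notin S_n$ is visited infinitely often, and in fact, since $V\setminus S_n$ can only shrink to a fixed set eventually (once $|c_n(v,w)|\geq N+1$ it stays $\geq 1$ by the flow property applied locally — actually one must be a little careful, since crossing numbers are not monotone; the right statement is that $S_n$ is nondecreasing is false, so instead I would fix a vertex visited infinitely often and whose incident crossing numbers return to the window $\llbracket -N,N\rrbracket$ infinitely often). At such a vertex, each time a walker is there it has a probability bounded below (uniformly, because there are only finitely many crossing-number configurations in the bounded window and $W_1,W_2$ are strictly positive) of taking any prescribed incident edge; so by a Lévy–Borel–Cantelli / second Borel–Cantelli argument along the successive visits, the walker will eventually push some incident crossing number past $N+1$ — contradicting that this vertex stays outside $S_n$ forever. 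One then iterates over the finitely many walkers to conclude $\mathrm{End}_n\subset S_n$ for some $n$, i.e., $\tau<\infty$.

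For the out-degree claim: take $v\in V_{\tau_\rho}$. If $v\in\mathrm{End}_{\tau_\rho}$ then $v\in S_{\tau_\rho}$, so there is $w$ with $|c_{\tau_\rho}(v,w)|\geq N+1$; using the flow property $\sum_{w'\sim v}c_{\tau_\rho}(v,w')\in\llbracket -N,N\rrbracket$, a single incident edge cannot have absolute value $\geq N+1$ unless some \emph{other} incident edge has the opposite sign, hence some incident edge $(v,w')$ has $c_{\tau_\rho}(v,w')>0$ — wait, more simply: if $c_{\tau_\rho}(v,w)\geq N+1>0$ then $(v,w)\in\vec E_{\tau_\rho}$ directly; if $c_{\tau_\rho}(v,w)\leq -(N+1)$ then $c_{\tau_\rho}(w,v)\geq N+1$, and the flow constraint at $v$ forces $\sum_{w'}c_{\tau_\rho}(v,w')\geq -N$, so there must exist $w'$ with $c_{\tau_\rho}(v,w')\geq 1>0$, giving $(v,w')\in\vec E_{\tau_\rho}$. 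If instead $v\in V_{\tau_\rho}\setminus\mathrm{End}_{\tau_\rho}$, the defining condition in~\eqref{def:D} gives some $w\sim_G v$ with $c_{\tau_\rho}(v,w)\neq 0$, and the same flow argument yields an outgoing edge. Hence $\deg^{\rm out}_{D_{\tau_\rho}}(v)\geq 1$ for all $v\in V_{\tau_\rho}$. Finally, a finite directed graph in which every vertex has out-degree $\geq 1$ must contain a directed cycle (follow outgoing edges; by pigeonhole a vertex repeats, and the first repetition yields a circuit). Applying this inside each connected component — which is itself finite with all out-degrees $\geq1$ — gives a circuit in every component, completing the proof.

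The main obstacle is the finiteness argument, specifically handling the non-monotonicity of the crossing numbers: unlike the ERRW, $c_n(v,w)$ can oscillate, so one cannot simply say "once large, always large". The clean way around this is to exploit the flow property to get the uniform lower bound on the probability of incrementing a chosen incident crossing number whenever the configuration lies in the bounded window $\llbracket -N,N\rrbracket^{\deg v}$, and then run a conditional Borel–Cantelli argument along the (infinitely many, under the contradiction hypothesis) return times to such a vertex; the point is that each such visit independently has a chance bounded below of driving the walk out of the "small crossing numbers" regime permanently, which cannot happen infinitely often without actually leaving it.
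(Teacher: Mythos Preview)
Your arguments for the out-degree claim and for the existence of a circuit in each connected component are correct and essentially match the paper's proof (the paper also checks explicitly that the target vertex $w'$ with $c_{\tau_\rho}(v,w')>0$ actually lies in $V_{\tau_\rho}$, which you should add, but this is immediate from~\eqref{def:D}).

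The finiteness argument, however, has a genuine gap. Under the hypothesis $\tau=\infty$ you locate a vertex $v_0$ visited infinitely often with all incident crossing numbers in $\llbracket -N,N\rrbracket$, and assert that at each such visit there is a uniformly positive chance of ``pushing some incident crossing number past $N+1$''. This does not yield a contradiction: (i) achieving $|c_n(v_0,w)|\geq N+1$ at one time $n$ does not prevent $v_0\notin S_m$ at later visit times $m$, since crossing numbers are not monotone and can drift back into $\llbracket -N,N\rrbracket$; and (ii) even if $v_0\in S_n$, the event $\{\tau\leq n\}$ requires ${\rm End}_n\subset S_n$, i.e.\ \emph{every} walker simultaneously at a vertex of $S_n$, not just one. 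Your closing sentence asserts that each visit ``has a chance bounded below of driving the walk out of the `small crossing numbers' regime permanently'', but this is precisely the statement to be proved, and nothing preceding it justifies the word \emph{permanently}.

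The paper proceeds constructively rather than by contradiction. From the configuration at time $\rho$ it exhibits an explicit finite sequence of moves, of probability bounded below uniformly in the environment, after which ${\rm End}\subset S$. If $S_\rho=\emptyset$ (all crossing numbers bounded by $N$), force every walker along a shortest path to a fixed circuit $C^*$ and then make each walker spin $N+3$ times around $C^*$; an arithmetic check shows the crossing numbers on $C^*$ then exceed $N$, so all endpoints lie in $S$. If $S_\rho\neq\emptyset$, force each walker not already in $S$ along a shortest path to the current $S$, one at a time while the others stay put; every interior vertex of such a path lies outside $S$, hence has all incident crossing numbers bounded by $N$, so the transition probabilities along the path are uniformly bounded below, and one checks the destination stays in $S$ upon arrival. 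Either way $\bb P(\tau_\rho<\infty\mid\mc F_\rho)\geq\delta>0$ uniformly, and a Borel--Cantelli iteration finishes. The idea missing from your plan is exactly this explicit ``good event'' that achieves ${\rm End}_n\subset S_n$ for all walkers at once; a single coordinate excursion at a single vertex does not suffice.
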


We prove Lemma~\ref{lem:taufinite} in Section~\ref{sec:lem:taufinite}. We proceed to define $\vec{Y}^{\tau}$ assuming the validity of the above result. The same construction can also be done with $\tau_\rho$ in place of $\tau$.  Here we write $c_\tau$ for the vector of crossing numbers associated to $\vec{Y}^\tau$. In particular, $c_{\tau,n}(v,u)$ is the crossing number of the edge $(v,u)$ at time $n$ associated to the process $\vec{Y}^\tau$.

The auxiliary process $\vec{Y}^{\tau}=(\vec{Y}^{\tau}(n);\,n\geq 0)$ is defined via:
\begin{enumerate}
	\item For $n\in [0,\tau]$ we set $\vec{Y}^{\tau}(n)=\vec{X}(n)$ and $c_{\tau,n}(\cdot,\cdot)=c_n(\cdot,\cdot).$
	\item For $n\geq \tau,$ $\vec{Y}^{\tau}$ evolves like a multi particle reinforced random walk on a directed graph with reinforcement function $W_1$: 
	\subitem{\rm a)} We select an index $j\in [N]$ uniformly at random.
	\subitem{\rm b)} Conditioned on $Y^{\tau}_j(n)=v\in D_\tau$ and $\mc F_n$, we set $Y^\tau_j(n+1)=w\in D_\tau$ with probability 
	\begin{equation*}
	\frac{W_1(c_{\tau,n}(v,w))}{\displaystyle\sum_{u:v\sim_{D_{\tau}} u}W_1(c_{\tau,n}(v,u))}\one\{v\sim_{D_{\tau}}w\}.
	\end{equation*} 
	
	\subitem{\rm c)} If $(Y^\tau_j(n),Y^\tau_j(n+1))=(v,w)$ we update the crossing number as follows: 
	\begin{equation*}
	c_{\tau,n+1}(\bar{v},\bar{w})=\begin{cases}c_{\tau,n}(\bar{v},\bar{w})+1,&\,\text{if }(\bar{v},\bar{w})=(v,w),\\
	c_{\tau,n}(\bar{v},\bar{w}),&\,\text{otherwise}.\end{cases}
	\end{equation*}
	
\end{enumerate}
\begin{remark}\label{rem:alltimes}\rm Since $\deg_{D_\tau}^{\rm out}(v)\geq 1$ for all $v\in D_{\tau}$ by Lemma~\ref{lem:taufinite} we have that  $\vec{Y}^{\tau}$ is defined for all times. 
\end{remark}

As in Section~\ref{sec:NCZR} we relate the processes $\vec{X}$ and $\vec{Y}^{\tau}$ via a Radon-Nikodym derivative on a special set of trajectories. Recall the lattice version $D^{N}_{\tau}$ defined in Section~\ref{sec:preliminaries} and that a path $\pi:\llbracket \tau,+\infty )\to V^{N}_{\tau}$ represents a possible trajectory of $\vec{Y}^{\tau}.$ The special set of trajectories is given by
\begin{equation}\label{def:Atau}
\mc{A}^{\uparrow}_{\tau}=\{\pi_{\tau}^{\infty}:=\pi:\llbracket \tau,+\infty )\to V^{N}_{\tau};\,\pi_{\tau}^{\infty}\text{ is a trajectory }\}\,.
\end{equation}
Observe that if $\pi_{\tau}^{\infty}\in \mc{A}_{\tau}^{\uparrow}$, on the event $\{(\vec{Y}^{\tau})_{\tau}^{\infty}=\pi_{\tau}^{\infty}\}$ the crossing numbers $n\geq \tau\mapsto |c_{\tau,n}(\cdot,\cdot)|$ are non-decreasing. 

Now we are ready to define the Radon-Nikodym derivative. To that end recall the notation introduced in Section~\ref{sec:circpath}. To simplify notation we identify $(\vec{Y}^{\tau})_{\tau}^{m}$ with $\vec{Y}_{\tau}^{m}$. For any $\pi_{\tau}\in \mc{A}_{\tau}^{\uparrow}$ and any $m\geq \tau$ we define
\begin{equation}\label{def:dXdY}
\frac{d\vec{X}}{d\vec{Y}}(\pi_{\tau}^{m})=\frac{\bb P(\vec{X}_{\tau}^{m}=\pi_{\tau}^{m})}{\bb P(\vec{Y}_{\tau}^{m}=\pi_{\tau}^{m})}\,.
\end{equation}

The following Lemma is the central piece of our results. Its proof can be found in Section~\ref{sec:lem:AntRW}.
\begin{lemma}\label{lem:AntRW}Assume that the graph $G$ has at least one circuit. The following holds almost surely:
	\begin{itemize}
		\item Assume that $W_1\in \mc{P}_{\leq}$, $W_2\in \mc{P}_{\geq}$, $W_2$ satisfies \eqref{eq:extraW2}, and that $ \Sigma(W_2/W_1)=\infty$. Then for any $\epsilon>0$ there exists $m_0=m_0(\epsilon,W_1,W_2,\Delta(G))$ such that for all $m\geq m_0$
		\begin{equation}\label{eq:upperboundXYeps}
		\sup\bigg\{\frac{d\vec{X}}{d\vec{Y}}(\pi_{\tau}^{m});\,\pi_{\tau}^{m}\in \mc{A}_{\tau}^{\uparrow}\bigg\}<\epsilon\,.
		\end{equation}
		\item Assume that~\eqref{eq:A2} holds for the pair $(W_1,W_2)$. Then
		\begin{equation}\label{eq:lowerbounddXdY}
		\exists\,\delta=\delta(|V|,\Delta(G),W_1,W_2)\,:\,\inf\bigg\{\frac{d\vec{X}}{d\vec{Y}}(\pi_{\tau}^{m});\,m\in \bb N,\,\pi_{\tau}^{m}\in \mc{A}_{\tau}^{\uparrow}\bigg\}\geq \delta\,.
		\end{equation}
	\end{itemize}
\end{lemma}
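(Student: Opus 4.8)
\textbf{Plan for the proof of Lemma~\ref{lem:AntRW}.}
The structure should mirror the computation carried out for Lemma~\ref{lem:NCZR}. First I would write out the Radon--Nikodym derivative \eqref{def:dXdY} as an explicit telescoping product over the time steps $k=\tau,\dots,m-1$. At step $k$ one of the $N$ walkers, say sitting at a vertex $v$ with $c_{\tau,k}(v,\cdot) = \phi_k$, makes a move; since $\pi_\tau^m\in\mc A_\tau^\uparrow$ the move is along an edge already oriented in $D_\tau$, i.e.\ one with positive crossing number. The numerator contributes the $\vec X$-transition probability \eqref{eq:transitions}, namely $W_1(c_{\tau,k}(v,w))/\big(\Upsilon^+(\phi_k)+\Upsilon^-(\phi_k)\big)$ (the denominator is the full sum over \emph{all} $G$-neighbours of $v$, split into edges with positive and with non-positive crossing number), while the denominator contributes the $\vec Y^\tau$-transition probability $W_1(c_{\tau,k}(v,w))/\Upsilon^+(\phi_k)$ (the sum is only over the out-neighbours in $D_\tau$). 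The factor $W_1(c_{\tau,k}(v,w))$ cancels, and one is left with
\begin{equation}\label{eq:dXdYproduct}
\frac{d\vec X}{d\vec Y}(\pi_\tau^m)=\prod_{k=\tau}^{m-1}\frac{\Upsilon^+(\phi_k)}{\Upsilon^+(\phi_k)+\Upsilon^-(\phi_k)}=\prod_{k=\tau}^{m-1}\frac{1}{1+\Upsilon^-(\phi_k)/\Upsilon^+(\phi_k)}\,,
\end{equation}
precisely in line with the ratio \eqref{eq:Upsilonratio}. The one subtlety to be careful about is the uniform $1/N$ factor from selecting the index $j$: since the selection is independent of the environment it appears identically in numerator and denominator (provided one also checks $D_\tau$ and the configuration of $\vec X$ at time $\tau$ agree, which holds by construction because $\vec Y^\tau$ copies $\vec X$ up to time $\tau$), so it cancels.

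Next, the key observation is that the sequence of environments $(\phi_k)_{k\geq\tau}$ seen along any trajectory in $\mc A_\tau^\uparrow$ is an element of the class $\Phi$ from Definition~\ref{def:Phi}: item~(1) holds with $d=\deg_G(v)\leq\Delta$, item~(2) is the flow property (the sum of crossing numbers out of a vertex equals the net number of walkers currently sitting there, which lies in $\llbracket -N,N\rrbracket$), and item~(3) is exactly the bookkeeping rule for how one walker step changes the crossing numbers out of a vertex — one coordinate with positive value goes up by one (the edge used to leave) and one coordinate with non-positive value goes down by one (the reversed edge used to enter). Strictly speaking one should note that a single step of $\vec X$ only changes the crossing numbers at \emph{two} vertices (the one left and the one entered), so to match Definition~\ref{def:Phi} verbatim one restricts attention to the time-subsequence of steps that affect a fixed vertex $v\in D_\tau$; since the product in \eqref{eq:dXdYproduct} factorizes over vertices, it suffices to bound each such sub-product. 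Thus from \eqref{eq:dXdYproduct} and $1+x\ge e^{x/2}$ for $x\in(0,2]$, or simply $\log(1+x)\le x$, one gets the clean bound
\begin{equation}\label{eq:dXdYexp}
\frac{d\vec X}{d\vec Y}(\pi_\tau^m)\ \ge\ \exp\Big(-\sum_{k=\tau}^{m-1}\frac{\Upsilon^-(\phi_k)}{\Upsilon^+(\phi_k)}\Big)\ \ge\ \exp\Big(-|V|\,\sup_{\phi\in\Phi}\sum_{n=0}^{\infty}\frac{\Upsilon^-(\phi_n)}{\Upsilon^+(\phi_n)}\Big)\,,
\end{equation}
which under \eqref{eq:A2} is a strictly positive constant $\delta$ depending only on $|V|,\Delta(G),W_1,W_2$ — this proves \eqref{eq:lowerbounddXdY}. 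For the upper bound \eqref{eq:upperboundXYeps} one argues in the opposite direction: under $W_1\in\mc P_{\le}$, $W_2\in\mc P_{\ge}$ one controls $\Upsilon^+(\phi_k)\le C_\Delta\,W_1(\text{(sum of positive coords)})$ and $\Upsilon^-(\phi_k)\ge c\,W_2(\text{(sum of }|{\text{non-positive coords}}|))$; combining this with the flow property (the positive part and the negative part of $\phi_k$ differ by at most $N$, and \eqref{eq:extraW2} lets one absorb that shift in $W_2$) yields a lower bound $\Upsilon^-(\phi_k)/\Upsilon^+(\phi_k)\ge c'\,(W_2/W_1)(a_k)$ where $a_k$ is a counter that increases by one every two steps along the trajectory (each crossing of an edge with positive number is eventually matched), so $\sum_k (W_2/W_1)(a_k)$ diverges like $\Sigma(W_2/W_1)=\infty$ and the product \eqref{eq:dXdYproduct} tends to $0$ uniformly in the trajectory; hence for every $\epsilon$ there is $m_0$ past which the supremum is $<\epsilon$.

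\textbf{Main obstacle.} The genuinely delicate point is not the telescoping identity \eqref{eq:dXdYproduct} but making the reduction to the class $\Phi$ airtight: one must check that \emph{every} environment-sequence arising along a monotone trajectory in $\mc A_\tau^\uparrow$ satisfies \emph{all three} conditions of Definition~\ref{def:Phi} simultaneously, in particular the flow property, which requires knowing that no walker has escaped the support region $S_\tau$ and that $D_\tau$ has out-degree at least one at every vertex — this is exactly what Lemma~\ref{lem:taufinite} buys us, and it is the reason $\tau$ was defined through the sets $S_n$ rather than more naively. A secondary technical nuisance is the per-vertex factorization of the product: since one $\vec X$-step touches two vertices while Definition~\ref{def:Phi}(3) describes changes at a single vertex, one has to organize the sum over $k$ as a sum over vertices $v\in V_\tau$ of a sub-product indexed by the steps exiting $v$, and verify that each of these sub-sequences is itself in $\Phi$ (with $d=\deg_G(v)$); only then can \eqref{eq:A2} be applied, giving the extra harmless factor $|V|$ (or $\Delta$, depending on how one counts) in the exponent of \eqref{eq:dXdYexp}. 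Finally, for \eqref{eq:upperboundXYeps} one must be slightly careful that $\Sigma(W_2/W_1)=\infty$ together with the class assumptions really forces divergence of $\sum_k(W_2/W_1)(a_k)$ even though the counter $a_k$ may stall at a vertex while other vertices are active; this follows because \emph{some} vertex must be visited infinitely often along an infinite monotone trajectory, and restricting to that vertex's sub-product already gives a divergent sum.
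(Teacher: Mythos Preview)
Your plan is correct and matches the paper's proof essentially line by line: the product formula \eqref{eq:dXdYproduct}, the inequality $1/(1+x)\ge e^{-x}$ for the lower bound, the per-vertex factorisation of the sum before invoking \eqref{eq:A2}, and for the upper bound the use of $W_1\in\mc P_{\le}$, $W_2\in\mc P_{\ge}$ together with the flow identity $F_k^+-F_k^-\in\llbracket-N,N\rrbracket$ and \eqref{eq:extraW2} to reduce to $(W_2/W_1)(F_k^+(v_k^\pi))$.

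Two small points worth tightening. First, the sentence ``$a_k$ is a counter that increases by one every two steps along the trajectory'' is not right as a global statement: the argument $F_k^+(v_k^\pi)$ depends on \emph{which} vertex is being exited at step $k$, and there is no single counter. The correct statement (which you do give later) is per-vertex: restricting to the subsequence of steps that exit a fixed vertex $v$, the quantity $F^+(v)$ increases by exactly one between consecutive such steps, so that sub-sum is a tail of $\Sigma(W_2/W_1)$; since some vertex is exited infinitely often this forces divergence. Second, to pass rigorously from ``$\sum_k \Upsilon^-/\Upsilon^+=\infty$'' to ``the product tends to $0$'' you need to handle the possibility that infinitely many terms $\Upsilon^-/\Upsilon^+$ are large (where $1/(1+x)\le e^{-x/2}$ fails); the paper does this by the split $I_m=\{k:\text{ratio}\le 2\}$ versus $I_m^\complement$, bounding the product by $3^{-|I_m^\complement|}\exp(-\tfrac12\sum_{I_m}\cdots)$ and noting at least one of $|I_m|,|I_m^\complement|$ tends to infinity. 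This is routine, but should be said.
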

\begin{remark} \rm Lemma~\ref{lem:AntRW} remains unchanged if we substitute $\tau$ by $\tau\circ\mc{S}_{\rho}$ for any finite stopping time $\rho$.
\end{remark}

Next, we describe the support of the process $\vec{Y}^{\tau}.$ 

To that end, we define the directed graph $\mc{D}_{\tau}^{\infty}=(\mc{V}_{\tau}^{\infty},\vec{\mc{E}}_{\tau}^{\infty})$ via 
\begin{equation}\label{def:Dinfty}
\begin{split}
\mc{V}_{\tau}^{\infty}&=\{v\in V_{\tau}\,:\,v \text{ is visited infinitely often by }\vec{Y}^{\tau}\}\,,\\
\vec{\mc{E}}_{\tau}^{\infty}&=\{(v,w)\in (\mc{V}_{\tau}^{\infty})^2\,:\,(v,w)\in \vec{E}_{\tau}\}\,.
\end{split}
\end{equation}
By Remark~\ref{rem:alltimes} we have that $\mc{D}_{\tau}^{\infty}\neq \emptyset.$

We define the following event:
\begin{itemize}
	\item  $\{\vec{Y}^\tau\in \mc{C}^{\uparrow}_{\tau}\}$ is the event that $\vec{Y}^\tau\in \mc{A}_{\tau}^{\uparrow}$ and such that there exist $m\geq \tau$, and $K\leq N$, and disjoint circuits $(C_i;\,i\in [K])$ in $D_{\tau}$ satisfying
	\begin{equation*}
	\bigcup_{i=1}^{K} C_i=\mc{D}_{\tau}^{\infty}\,.
	\end{equation*}
	Here, the last identity has to be understood in the sense that the edge and vertex sets of both graphs coincide.
\end{itemize}

As a consequence of the definitions we obtain that
\begin{equation*}
\begin{split}
{\rm Monot}&= \{\pi\,:\,\exists\,\rho\in \mc{T}_{f} \text{ s.t. }\pi_{\tau_{\rho}}^{\infty}\in \mc{A}^{\uparrow}_{\tau_{\rho}}\}\,,\\
{\rm LocTrapp}&=\{\pi\,:\,\exists\, \rho\in \mc{T}_{f} \text{ s.t. }\pi_{\tau_{\rho}}^{\infty}\in \mc{C}^{\uparrow}_{\tau_{\rho}}\}\,.
\end{split}
\end{equation*}

\begin{lemma}\label{lemma:suppYn} Let $\rho \in \mc{T}_{f}$ and $\tau_{\rho}=\tau\circ\mc{S}_{\rho}.$ If $G$ has at least one circuit then for any choice of $W_1$, we have that $\bb P(\vec{Y}^{\tau_{\rho}}\in \mc{A}^{\uparrow}_{\tau_{\rho}})=1.$ Furthermore, if $\Sigma(W_1^{-1})<\infty$, then $\bb P(\vec{Y}^{\tau_{\rho}}\in \mc{C}^{\uparrow}_{\tau_{\rho}})=1.$

	If the graph $G$ has only one circuit, then 
	\begin{equation}
	\{\vec{Y}^{\tau_{\rho}}\in \mc{A}^{\uparrow}_{\tau_{\rho}}\}=\{\vec{Y}^{\tau_{\rho}}\in \mc{C}^{\uparrow}_{\tau_{\rho}}\}\,,
	\end{equation}
	and in particular, the above events occur with probability one.
\end{lemma}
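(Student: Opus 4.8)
The plan is to handle the three assertions in turn, abbreviating $\tau=\tau_\rho$ and $D=D_\tau$ throughout. The first assertion is purely structural: by Lemma~\ref{lem:taufinite} the time $\tau$ is a.s. finite and $\deg_{D}^{\rm out}(v)\ge 1$ for every $v\in V_\tau$, so $\vec Y^{\tau}$ is well defined for all times (Remark~\ref{rem:alltimes}); and by construction (item (2) of the definition of $\vec Y^{\tau}$) it only ever moves along edges of $D$, so $(\vec Y^{\tau})_\tau^\infty$ is a trajectory in $D^N$, i.e. a.s.\ an element of $\mc A^{\uparrow}_\tau$. This uses no hypothesis on $W_1$.

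For the second assertion I assume $\Sigma(W_1^{-1})<\infty$ and use Rubin's construction. The key point is that, for a fixed $v\in V_\tau$, the crossing numbers $c_{\tau,n}(v,u)$ with $u\sim_D v$ change only at the times at which $\vec Y^{\tau}$ departs from $v$, and at such a step the exit edge $(v,u)$ is chosen with probability proportional to $W_1(c_{\tau,n}(v,u))$; hence, reading the exit edges along the successive departure times from $v$, one obtains exactly a Balls-in-Bins process with feedback $W_1$, the bins being the out-neighbours of $v$ in $D$ and the initial configuration $(c_\tau(v,u))_{u\sim_D v}$. On the event that $v$ is visited infinitely often — equivalently departed from infinitely often, since a.s.\ every visit is eventually followed by a departure — this process makes infinitely many steps, so Theorem~\ref{thm:Rubin} gives an out-neighbour $\sigma(v)$ of $v$ and a finite time after which every departure from $v$ uses $(v,\sigma(v))$. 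I would then show that $\sigma$ is a permutation of the finite set $\mc V^{\infty}_\tau$: it maps $\mc V^{\infty}_\tau$ into itself, since $\sigma(v)$ receives infinitely many arrivals; and it is onto, since each recurrent $w$ receives infinitely many arrivals which, past the finite time after which no walker visits a non-recurrent vertex and all exit maps have stabilised, come from recurrent vertices $v$ with $\sigma(v)=w$. A permutation of a finite set splits into disjoint cycles $C_1,\dots,C_K$, each a circuit of $D$; past the maximum of the finitely many stabilisation times the only edges ever traversed are $\{(v,\sigma(v)):v\in\mc V^{\infty}_\tau\}=\bigcup_i E(C_i)$, and each walker then simply follows $\sigma$, hence spins around exactly one $C_i$. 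This identifies $\mc D^{\infty}_\tau$ with the disjoint union $\bigcup_{i=1}^K C_i$, and since each $C_i$ is crossed infinitely often while there are only $N$ walkers, a pigeonhole argument gives $K\le N$. Combined with the first assertion this is $\vec Y^{\tau}\in\mc C^{\uparrow}_\tau$ a.s.

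For the third assertion, suppose $G$ has a unique circuit. Since $c_\tau(v,w)>0$ forces $c_\tau(w,v)<0$, $D$ is an orientation of a subgraph of $G$, so every directed cycle of $D$ projects onto a circuit of $G$; together with Lemma~\ref{lem:taufinite} (each component of $D$ contains a circuit) this forces $D$ to be weakly connected with a single directed cycle $\vec C_0$. Because $\deg^{\rm out}_D\ge 1$ and $V_\tau$ is finite, every infinite directed walk in $D$ contains a cycle and hence reaches $\vec C_0$; moreover each vertex of $\vec C_0$ has out-degree exactly $1$ in $D$, for an extra out-edge, together with a directed path back to $\vec C_0$ (which exists by the preceding sentence), would create a second circuit. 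Consequently, along \emph{every} trajectory in $\mc A^{\uparrow}_\tau$ each walker reaches $\vec C_0$ in finitely many steps and thereafter spins around it, so $\mc D^{\infty}_\tau=\vec C_0$ is a single circuit for every such trajectory; that is, $\mc A^{\uparrow}_\tau\subseteq\mc C^{\uparrow}_\tau$, and the reverse inclusion is trivial, so the two events coincide and, by the first assertion, occur with probability one.

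The main obstacle is the second assertion, on two counts. First, one must make the informal ``Balls-in-Bins at $v$'' precise enough to invoke Theorem~\ref{thm:Rubin} despite the activation times of that process being random and measurable with respect to the whole trajectory; the standard way is to note that, conditionally on infinitely many activations, the sequence of bin choices is genuinely a Balls-in-Bins chain with feedback $W_1$ and that monopoly is a tail event of that chain. Second, one must establish the identity $\bigcup_i C_i=\mc D^{\infty}_\tau$ at the level of both vertex and edge sets, i.e.\ verify that no edge of $D$ joins two recurrent vertices other than the edges $(v,\sigma(v))$; this is the delicate combinatorial step and is where the role of the time $m$ in the definition of $\mc C^{\uparrow}_\tau$ (restricting attention to the trajectory after all exit maps have stabilised) comes in.
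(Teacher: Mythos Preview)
Your proposal is correct and follows essentially the same route as the paper: both reduce the dynamics at each vertex $v$ to a balls-in-bins process with feedback $W_1$ and invoke Theorem~\ref{thm:Rubin} (monopoly) to force a unique out-edge from each infinitely visited vertex. The only cosmetic difference is organisational: you package the monopoly conclusion as a map $\sigma$ on $\mc V^{\infty}_\tau$ and argue it is a permutation whose cycle decomposition yields the disjoint circuits, whereas the paper argues via the family $\Gamma$ of circuits crossed infinitely often and shows $|\Gamma_v|=1$ for each $v$; these are two phrasings of the same argument. Your treatment of the one-circuit case is likewise the same non-backtracking argument the paper gives, just spelled out in slightly more detail. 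The two obstacles you flag at the end (making the balls-in-bins identification rigorous despite random activation times, and matching edge sets in $\mc D^{\infty}_\tau=\bigcup_i C_i$) are handled in the paper at the same level of informality as in your sketch.
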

The above result resembles the phenomena observed in Theorem~\ref{thm:Rubin}. The proof of Lemma~\ref{lemma:suppYn} is also based on Theorem~\ref{thm:Rubin} and the reader will find it in Section~\ref{sec:lemma:suppYn}.

Assuming  Lemmas~\ref{lem:taufinite},\,\ref{lem:AntRW} and~\ref{lemma:suppYn} we show how to prove Theorem~\ref{thm:PMonot0} and Theorem~\ref{theo:very-strong}.

\begin{proof}[Proof of Theorem~\ref{thm:PMonot0}] We are assuming that $W_1\in \mc{P}_{\leq}$, $W_2\in \mc{P}_{\geq}$, $W_2$ satisfies \eqref{eq:extraW2} and that $ \Sigma(W_2/W_1)=\infty$. Then the relation
\begin{equation}\label{eq:goal4}
\bb P(\exists\,\rho\in \mc{T}_f\,:\,\vec{X}_{\tau_{\rho}}^{\infty}\in \mc{A}^{\uparrow}_{\tau_{\rho}})=0\,,
\end{equation} 
can be deduced as in~\eqref{eq:upperNCZR} from the upper bound in~\eqref{eq:upperboundXYeps}.
\end{proof}

\begin{proof}[Proof of Theorem~\ref{theo:very-strong}]
We assume that~\eqref{eq:A2} holds. According to Lemma~\ref{lem:AntRW} the lower bound in~\eqref{eq:lowerbounddXdY} holds. We will proceed to show that
\begin{equation}\label{eq:goal1}
\bb P(\exists\,\rho\in \mc{T}_f\,:\,\vec{X}_{\tau_{\rho}}^{\infty}\in \mc{A}^{\uparrow}_{\tau_{\rho}})=1\,,
\end{equation} 
and hence proving that the vector of crossing numbers induced by $\vec{X}$ is eventually monotone.

Motivated by \eqref{eq:goal1} we define recursively:
\begin{enumerate}
	\item $\sigma_{0}=0$, $\bar{\tau}_0=0$
	\item If $\sigma_{k}=\infty$ then $\sigma_{k+1}=\infty$  and $\bar{\tau}_{k+1}=+\infty$,
	\subitem otherwise, $\bar{\tau}_{k+1}=\tau\circ\mc{S}_{\sigma_k}$
	\begin{equation}\label{eq:stoppAnt}
	\sigma_{k+1}=\inf\{n\geq \bar{\tau}_{k+1}\,:\,\vec{X}_{\bar{\tau}_{k+1}}^{n}\notin \mc{A}^{\uparrow}_{\bar{\tau}_{k+1}}\}\,.
	\end{equation}
\end{enumerate}


As in Section~\ref{sec:NCZR} it is enough to show that $\sum_{k\geq 0}\bb P_{\xi_{0}}(\sigma_{k}<\infty)<\infty$. Since the arguments employed to prove that mimic the ones already used in Section~\ref{sec:NCZR} we just sketch the main steps. 

Recall that, assuming that the graph $G$ has at least one circuit, the walker $\vec{Y}^\tau$ is well defined for all times.

Fix $m\in \bb N$. In~\eqref{eq:step1Ant} below we use~\eqref{eq:lowerbounddXdY} in the fourth line, whereas in the last line we use  that $\vec{Y}^{\tau}\in \mc{A}^{\uparrow}_{\tau}$ almost surely (cf. Lemma~\ref{lemma:suppYn}). We obtain
\begin{equation}\label{eq:step1Ant}
\begin{split}
\bb P_{\xi_{0}}(\sigma_{1}\geq m)&\geq \bb P_{\xi_{0}}(\vec{X}_{\tau}^m\in \mc{A}^{\uparrow}_{\tau})\\
&=\sum_{\pi_{\tau}^m \in \mc{A}^{\uparrow}_{\tau}}\bb P_{\xi_{0}}(\vec{X}_{\tau}^m=\pi_{\tau}^m)\\
&=\sum_{\pi_{\tau}^m \in \mc{A}^{\uparrow}_{\tau}}\frac{d\vec{X}}{d\vec{Y}^{\tau}}(\pi_{\tau}^m)\bb P_{\xi_{0}}(\vec{Y}_{\tau}^m=\pi_{\tau}^m)\\
&\geq \delta \bb P_{\xi_{0}}(\vec{Y}_{\tau}^m\in\mc{A}^{\uparrow}_{\tau})\\
&=\delta.
\end{split}
\end{equation}
 We can now conclude as in Section~\ref{sec:NCZR}.
 
If the graph $G$ has exactly one circuit, we leave to the reader to check that
 \begin{equation}
 \{\vec{X}_{\tau_{\rho}}^{\infty}\in \mc{A}^{\uparrow}_{\tau_{\rho}}\}=\{\vec{X}_{\tau_{\rho}}^{\infty}\in \mc{C}^{\uparrow}_{\tau_{\rho}}\}\,.
 \end{equation}

We assume now that $G$ has at least two circuits. 

From~\eqref{eq:A2} we have that $\Sigma(W_1^{-1})<+\infty.$ Recall that under our assumptions the lower bound in~\eqref{eq:lowerbounddXdY} holds. Our goal is to prove that
\begin{equation}\label{eq:goal2}
\bb P(\exists\,\rho\in \mc{T}_f\,:\,\vec{X}_{\tau_{\rho}}^{\infty}\in \mc{C}^{\uparrow}_{\tau_{\rho}})=1\,.
\end{equation} 

In order to prove \eqref{eq:goal2}  we will use Lemma~\ref{lemma:suppYn} which asserts that
\begin{equation*}
\bb P(\vec{Y}^{\tau_{\rho}}\in \mc{C}^{\uparrow}_{\tau_{\rho}})=1\,.
\end{equation*}
Recall that $\mc{C}^{\uparrow}_{\tau}\subset\mc{A}^{\uparrow}_{\tau}$. Then repeating the argument in \eqref{eq:step1Ant} with $\mc{C}^{\uparrow}_{\tau}$ instead {\color{blue}of} $\mc{A}^{\uparrow}_{\tau}$ we obtain the corresponding lower bound and as consequence the result.

\end{proof}

\subsection{The stopping time $\tau$ is finite}\label{sec:lem:taufinite}
In this section we prove Lemma~\ref{lem:taufinite}.

\begin{center}
	\textit{ Our first goal} is to prove that given $\rho\in \mc{T}_{f}$ the stopping time
	$
	\tau_{\rho}=\inf\{n\geq \rho\,:\,{\rm End}_n\subset S_n\}\,
	$
	is finite almost surely.
\end{center}

The proof is inspired by calculations that can already be found in~\cite[Section~3]{EFR2019}.

\textit{First case:} assume that $S_{\rho}=\emptyset$, i.e., for all pairs $(u,v)\in V^2$ we have that $|c_{\rho}(u,v)|\leq N.$ Using that $G$ has at least one circuit,  we fix a circuit $C^*$ in $G$ and consider the following event:
\begin{enumerate}
	\item For each $i\in[N]$, the walker $X_i$ follows a shortest path that connects $X_i(\rho)$ to $C^*.$ The crossing numbers decrease at most by $1$ for each walker.
	\item  Then the walker $X_i$ gives $N+3$ turns around $C^*.$ This increases the crossing numbers of $C^*$ by $N(N+3)$.
\end{enumerate}
Let $\rho'-\rho$ be the time needed for that event.  Recall that at time $\rho$ the crossing numbers for edges in the circuit are at least $-N$. When all the walkers arrive at $C^*$ these crossing numbers decrease by at most $-N$. At time $\rho'$ the crossing number of $C^*$  are increased by $N(N+3)$. Therefore, at time $\rho'$ the crossing numbers for the edges in the circuit $C^*$ are at least $N(N+3)-N-N=N^2+N$ which is greater than $N$ for any $N\geq 1$.  That is, $S_{\rho'}\neq \emptyset$ and ${\rm End}_{\rho'}\subset S_{\rho'}.$ Observe also that, since we started with bounded crossing numbers, the above event has a probability that is uniformly lower bounded by a positive constant  depending only on $N$ and the degrees of the graph (cf. \cite[Proposition~3.7]{EFR2019}).

\textit{Second case:} assume that $S_{\rho}\neq \emptyset.$ If ${\rm End}_{\rho}\subset S_{\rho}$ we can conclude because in that case  one simply has that $\tau_\rho=\rho$. Otherwise, the set $I=\{i\in [N]\,;\,X_{i}(\rho)\notin S_{\rho}\}$ is non empty. Without loss of generality we write $I=\{1,\cdots, K\}$ for $1\leq K\leq N$. We employ a strategy that works as follows:
\begin{enumerate}
\item First we force the walker $X_1$ to follow a shortest path that connects $X_1(\rho)$ to $S_{\rho}$ while all the other walkers are forced to stay put. This will ensure that there exists a time $n_1$ such that $X_1(\rho+n_1)\in S_{\rho+n_1} $.
\item In case $K\geq 2$, if $X_2 \in S_{\rho+n_1}$ there is nothing to do, otherwise we apply the first step to $X_2$ and $S_{\rho+n_1}$.
\item We iterate until the set $I$ is fully exhausted.
\end{enumerate} 
The above strategy ensures that there exists a time $n$ such that $X_i(\rho+n)\in S_{\rho+n}$ for all $i \in [N]$.

We now argue why the first step of the strategy works and can be employed with a probability that is lower bounded from below uniformly in the environment. A similar strategy has already been employed in~\cite[Lemma 3.2]{EFR2019} and therefore we refrain from giving a detailed proof here. Since $G$ is connected, there exists a shortest path $\pi=(\pi(1),\cdots,\pi(\ell))$ connecting $X_1(\rho)$ to $S_{\rho}$, and we force the walker $X_1$ to follow $\pi$. 
\begin{center}
Claim: $X_1(\rho+\ell)\in S_{\rho+\ell}.$
\end{center}  By the definition of a shortest path we have that $\pi(i)\notin S_{\rho}$ for $i=1,\cdots,\ell-1$ and $\pi(\ell)\in S_{\rho}$. This immediately implies, by the definition of $S_\rho$, that $|c_{\rho}(\pi(i),\pi(i+1))|\leq N$ for $i=1,\cdots,\ell-1$. Recall that there exists $w$ such that  $|c_\rho(\pi(\ell),w)|\geq N+1$. In particular, $w\neq \pi(i)$ for $i=1,\cdots,\ell-1$. As consequence, when the walker $X_1$ arrives at $\pi(\ell)$ we have that $|c_{\rho+\ell}(\pi(\ell),w)|\geq N+1$ because the edge $\{w,\pi(\ell)\}$ is not crossed by $X_1$ (recall that we force all other walkers to stay put). And the claim is proved since $X_1(\rho+\ell)=\pi(\ell).$  Since the crossing numbers of $\pi$ are bounded by $N$, the probability of following this path is uniformly lower bounded in the initial environment (but it depends on $N$).

 Now we apply the second step and when we finish the iteration the  final lower bound will still be uniform in the environment because the number of walkers is finite.

Similar calculations as in \eqref{eq:borelcantelli} show that $\tau_{\rho}$ is finite almost surely.

\begin{center}
	\textit{ Our second goal} is to prove that $v\in D_{\tau} \implies \deg_{D_{\tau}}^{\rm out}(v)\geq 1$. 
\end{center}

The proof relies on flow properties induced by the vector of crossing numbers. This idea was already used in~\cite{EFR2019}.

For each $j\in [N]$ we define the \textit{individual} flow by
\begin{equation*}
F_n^{(j)}(v)=\sum_{w:w\sim_Gv}c_n^{(j)}(v,w).
\end{equation*}
Then \cite[Lemma~3.4]{EFR2019} implies that $F_n^{(j)}(v)\in \{-1,0,+1\}$ for all times and we have the following cases:
\begin{enumerate}
	\item $F_n^{(j)}(v)=+1$ if $v$ is the starting point of $X_j$ but is not the end point.
	\item $F_n^{(j)}(v)=-1$ if $v$ is the end point of $X_j$ but is not at the same time the starting point.
	\item $F_n^{(j)}(v)=0$ in all other cases.
\end{enumerate}

We define the total flow at vertex $v\in V$ by
\begin{equation}\label{def:Totalflow}
F_n(v)=\sum_{j=1}^NF_n^{(j)}(v),
\end{equation}
and we directly have the following:
\begin{enumerate}
	\item $F_n(v) \in \llbracket-N,+N\rrbracket,$ for all $v\in V(G)$ and for all $n\geq 0.$
	\item $F_n(v)\geq 0$ if $v$ is not an end point of any of the walkers $X_j$ at time $n.$
\end{enumerate}

With the above facts at hand we proceed to show that $\deg_{D_{\tau}}^{\rm out}(v)\geq 1$ for all $v\in D_{\tau}$.  First of all note that by the definition of the stopping time $\tau,$ $S_{\tau}\subset V_{\tau}.$ 

\textit{First case:} $v \notin {\rm End}_{\tau}$. Then by definition of $V_{\tau}$ (cf. \eqref{def:D}) there exists $u\sim_Gv$ such that $c_{\tau}(v,u)\neq 0.$ The condition $F_{\tau}(v)\geq 0$ enforces that there exists $w\sim_G v$ with $c_{\tau}(v,w)>0.$ We prove that $w\in V_{\tau}$ resulting in $\deg_{D_{\tau}}^{\rm out}(v)\geq 1$:
\begin{itemize}
	\item If $w\in {\rm End}_{\tau}$ then $w\in V_{\tau}$ by definition.
	\item If $w\notin {\rm End}_{\tau}$ we use that $c_{\tau}(w,v)\neq 0$ which implies $w\in V_{\tau}.$
\end{itemize}  

\textit{Second case:} $v \in {\rm End}_{\tau}$.  Then by definition of $\tau$, $v\in S_{\tau}$. As a consequence, there exists $w\sim_Gv$ such that $|c_{\tau}(v,w)|\geq N+1.$ Since $F_n(v)\in \llbracket-N,N\rrbracket$, there exists $u\sim_Gv$ with $c_{\tau}(v,u)>0.$ The proof that $u\in V_{\tau}$ is the same as in the first case. That is, in any case we have that $\deg_{D_{\tau}}^{\rm out}(v)\geq 1.$

\begin{center}
	\textit{ Our third goal} is to prove: \\ each connected component of $D_{\tau_{\rho}}$ has at least one circuit.
\end{center}

This is a variation of a standard result in basic combinatorics which asserts that a graph $G$ with $\min_v \deg_G(v)\geq 2$ has at least one circuit. We prove the above claim for completeness. 

Fix a connected component $D\subseteq D_{\tau}$ and an arbitrary vertex $v_0 \in D$. Since $\deg_{D_{\tau}}^{\rm out}(v_0)\geq 1$, there exists $v_1\in D$ such that $v_1\sim_{D_{\tau}} v_0$. Note that $c_{\tau}(v_0,v_1)> 0$. In the same way we see that there exists $v_2\in D$ with $c_{\tau}(v_1,v_2)>0$. Since $c_{\tau}(v_2,v_1)= -c_{\tau}(v_1,v_2) <0 < c_{\tau}(v_0,v_1)$ we can conclude that $v_0\neq v_2$. One may now in the same fashion construct a sequence $v_0,v_1,v_2,\ldots$ of vertices in $D$ such that for all $i\geq 0$ one has that $c_{\tau}(v_i,v_{i+1}) >0$ and such that $v_i\neq v_{i+2}$, i.e., that sequence does not immediately backtrack. Since the oriented graph $D_{\tau}$ is finite, there will be eventually a repetition in the sequence, i.e., there exist two indices $i_1$ and $i_2$ with $|i_1-i_2|\geq 3$ such that $v_{i_1}=v_{i_2}$ so that the sequence $v_{i_1}, v_{i_1 +1},\ldots, v_{i_2}$ forms a circuit. Thus, we can conclude. 

\subsection{Estimative of the Radom-Nikodym derivative - Ant RW}\label{sec:lem:AntRW}
In this section we prove Lemma~\ref{lem:AntRW}.

The proof follows along similar lines as the one in Section~\ref{sec:lem:NCZR}. 
Again we will turn the expression $\frac{d\vec{X}}{d\vec{Y}}$ into an exponential of a sum, and estimate that exponential. In the case that $W_1\in \mc{P}_{\leq}$, $W_2\in \mc{P}_{\geq}$ (cf. \eqref{eq:defF}) the sum obtained in this way can be controlled by using the total flow of the crossing numbers. It then turns out that the new sum will behave like $\Sigma(W_2/W_1).$ For more general reinforcement functions we assume \eqref{eq:A2}.

Let $\pi_{\tau}^m\in \mc{A}_{\tau}^{\uparrow}$. In particular $\pi_{\tau}^m$ is a path, hence for every $k\in\llbracket \tau,m-1\rrbracket$, there exists a $j\in[N]$ such that $\pi_k(i)=\pi_{k+1}(i)$ for all $i\neq j$, and $(\pi_k(j),\pi_{k+1}(j))\in \vec{E}_\tau$. In the following computation we write $v_k^\pi$ instead of $\pi_j(k)$, i.e., $v_k^\pi$ is the only coordinate of $\pi_k$ which changes  from time $k$ to $k+1$. A straightforward computation shows that 
\begin{equation}\label{eq:exprdXdY}
\begin{split}
\frac{d\vec{X}}{d\vec{Y}}(\pi_{\tau}^{m})&=\prod_{k=\tau}^{m-1}\frac{\displaystyle\sum_{w;c_{k}(v^{\pi}_{k},w)>0}W_1(c_{k}(v^{\pi}_{k},w))}{\displaystyle\sum_{w;c_{k}(v^{\pi}_{k},w)>0}W_1(c_{k}(v^{\pi}_{k},w))+\displaystyle\sum_{w;c_{k}(v^{\pi}_{k},w)\leq 0}W_2(-c_{k}(v^{\pi}_{k},w))}\\
&=\prod_{k=\tau}^{m-1}\frac{1}{1+\frac{\displaystyle\sum_{w;c_{k}(v^{\pi}_{k},w)\leq 0}W_2(-c_{k}(v^{\pi}_{k},w))}{\displaystyle\sum_{w;c_{k}(v^{\pi}_{k},w)> 0}W_1(c_{k}(v^{\pi}_{k},w))}}.
\end{split}
\end{equation}
Now we split the analysis in cases, depending on the assumptions about $W_1$ and $W_2.$ We start with the simplest case.

\begin{center}
	\textit{Case 1:} $(W_1,W_2)$ satisfies \eqref{eq:A2}.
\end{center}

From~\eqref{eq:Upsilon}, and the inequality $1/(1+x)\geq e^{-x}$ applied to the right hand side in~\eqref{eq:exprdXdY} we immediately obtain that
\begin{equation*}
\begin{split}
\frac{d\vec{X}}{d\vec{Y}}(\pi_{\tau}^{m})&=\prod_{k=\tau}^{m-1}\frac{1}{1+\frac{\Upsilon^{-}(c_k(v_{k}^{\pi},\,\cdot\,))}{\Upsilon^{+}(c_k(v_{k}^{\pi},\,\cdot\,))}}\\
&\geq \exp\bigg(-\sum_{k=\tau}^{m-1}\frac{\Upsilon^{-}(c_k(v_{k}^{\pi},\,\cdot\,))}{\Upsilon^{+}(c_k(v_{k}^{\pi},\,\cdot\,))}\bigg)\,.\end{split}
\end{equation*}

It remains to estimate the sum inside the exponential.
To that end we write
\begin{equation}\label{eq:doublesum}
\sum_{k=\tau}^{m-1}\frac{\Upsilon^{-}(c_k(v_{k}^{\pi},\,\cdot\,))}{\Upsilon^{+}(c_k(v_{k}^{\pi},\,\cdot\,))}= \sum_{v\in V(D_\tau)} \sum_{k=\tau}^{m-1}\frac{\Upsilon^{-}(c_k(v,\,\cdot\,))}{\Upsilon^{+}(c_k(v,\,\cdot\,))}\mathds{1}_{\{v^{\pi}_{k}= v\}}\,.
\end{equation}

Recall the definition of $\Phi$ in Definition~\eqref{def:Phi}.
Fix a vertex $v$ and let $n_1(v)$, $n_2(v)$, $\ldots$ be the sequence of times at which the path $\pi_{\tau}^{m}$ visits $v$. Since the state space of $\pi_{\tau}^m$ is $D_{\tau}^N$ we can conclude that $(c_{n_i(v)}(v,\cdot))_i$ belongs to $\Phi$. Hence, by Assumption~\eqref{eq:A2} the inner sum in~\eqref{eq:doublesum} is uniformly bounded from above, and since $|V|$ is finite the same is true for the sum on the left hand side of~\eqref{eq:doublesum}. Hence, we can conclude.

\begin{center}
	\textit{Case 2:} $W_1\in \mc{P}_{\leq}$, $W_2\in \mc{P}_{\geq}$, $W_2$ satisfies \eqref{eq:extraW2} and $ \Sigma(W_2/W_1)=\infty$.
\end{center}

Define the positive and negative flow (cf. \eqref{def:Totalflow}) at $v^{\pi}_{k}$ by
\begin{equation*}
\begin{split}
F_{k}^{+}(v^{\pi}_{k})&=\displaystyle\sum_{w;c_{k}(v^{\pi}_{k},w)> 0}c_{k}(v^{\pi}_{k},w)\,,\\
F_{k}^{-}(v^{\pi}_{k})&=\displaystyle\sum_{w;c_{k}(v^{\pi}_{k},w)\leq 0}-c_{k}(v^{\pi}_{k},w)\,.
\end{split}
\end{equation*}
By \eqref{eq:defF} there exists a constant $C=C(\Delta(G),W_1,W_2)$ such that
\begin{equation}\label{eq:boundW}
\begin{split}
 \displaystyle\sum_{w;c_{k}(v^{\pi}_{k},w)> 0}W_1(c_{k}(v^{\pi}_{k},w))&\leq CW_1(F_{k}^{+}(v^{\pi}_{k}))\,,\\
 \displaystyle\sum_{w;c_{k}(v^{\pi}_{k},w)\leq 0}W_2(-c_{k}(v^{\pi}_{k},w))&\geq C^{-1}W_2(F_{k}^{-}(v^{\pi}_{k}))\,.
\end{split}
\end{equation}

 Using \eqref{eq:boundW} in \eqref{eq:exprdXdY}, the relation $F_k^+(v_{k}^{\pi})-F_k^-(v_{k}^{\pi})=F_k(v_{k}^{\pi})$ and that $W_2$ satisfies \eqref{eq:extraW2}, we see that there exists a constant $\bar{\mc C}$ such that
\begin{equation*}
\frac{d\vec{X}}{d\vec{Y}}(\pi_{\tau}^{m})\leq \prod_{k=\tau}^{m-1}\frac{1}{1+C^{-2}\frac{W_2(F_k^{-}(v_{k}^{\pi}))}{W_1(F_k^{+}(v_{k}^{\pi}))}}\leq \prod_{k=\tau}^{m-1}\frac{1}{1+\bar{\mc C}\frac{W_2(F_k^{+}(v_{k}^{\pi}))}{W_1(F_k^{+}(v_{k}^{\pi}))}}.
\end{equation*}
Observe that for $x\in (0,2]$ it holds that $1/(1+x)\leq e^{-x/2}.$ If $x>2$ we have that $1/(1+x)\leq 1/3$. Define the set of indices
\begin{equation*}
I_{m}=\{\tau\leq k\leq m-1\,:\,\bar{\mc C}\frac{W_2(F_k^{+}(v_{k}^{\pi}))}{W_1(F_k^{+}(v_{k}^{\pi}))}\leq 2\}\,.
\end{equation*}
Therefore,
\begin{equation}\label{eq:finalupper}
\frac{d\vec{X}}{d\vec{Y}}(\pi_{\tau}^{m})\leq 3^{-|I_m^{\complement}|}\exp\bigg(-\frac{1}{2}\sum_{k\in I_{m}} \bar{\mc C}\frac{W_2(F_k^{+}(v_{k}^{\pi}))}{W_1(F_k^{+}(v_{k}^{\pi}))}\bigg).
\end{equation}
As $m\to \infty$ then either $|I_m|\to \infty$ or $|I_m^{\complement}|\to \infty$ (or both tend to infinity). Since, $\Sigma(W_2/W_1)=\infty$, using similar arguments as in~\eqref{eq:doublesum} we can conclude in any case.

\subsection{The support of $\vec{Y}^{\tau}$}\label{sec:lemma:suppYn}
In this section we prove Lemma~\ref{lemma:suppYn}.

The strategy of the proof is to use a collection of independent balls-in-bins processes (cf. Section~\ref{sec:balls-in-bins}) to provide an alternative construction of the process $\vec{Y}^{\tau}.$ Then we use Theorem~\ref{thm:Rubin} to describe the long time behaviour of $\vec{Y}^{\tau}$.

For each $v \in V_{\tau}$ we consider the balls-in-bins process $\vec{\eta}^{v}:=(\eta^{v}_{w}(n);(v,w)\in \vec{E}_{\tau})_{n\in \bb N}$ with initial number of balls given by $\eta^{v}_{w}(0)=c_{\tau}(v,w)$ and reinforcement function $W_1.$ Observe that $\eta^{v}(w)(0)\geq 1$ by definition of $D_{\tau}$ and that  the process $\vec{\eta}^{v}$ consists of $\deg_{D_{\tau}}^{\rm out}(v)\geq 1$ bins.

For the alternative construction of the process $\vec{Y}^{\tau}$ from time $\tau$ onwards assume that $\vec{Y}^{\tau}(\tau)=\vec{v} \in D_{\tau}^N$. We first choose $i\in [N]$ uniformly at random. It means that the walker $Y_i$ will jump. Since $Y_i(\tau)=v_i$ we throw a ball in a bin according to the law of $\vec{\eta}^{v_i}$. If this ball is thrown in the bin corresponding to $w$ then we set $Y^{\tau}_{i}(\tau+1)=w.$ In particular, this means that $\eta^{v}_{w}(1)=\eta^{v}_{w}(0)+1=c_{\tau,\tau}(v,w)+1=c_{\tau,\tau+1}(v,w).$  We continue the process in this way. It is easy to see from the definition of ball-in-bins dynamics (cf. Section~\ref{sec:balls-in-bins}) that we generate a process with the same law as the process $\vec{Y}^{\tau}$ defined in Section~\ref{sec:proofAntRW}. 

We proceed to prove Lemma~\ref{lemma:suppYn}. From now on we drop the dependency on $\tau.$ Recall the definition of $\mc{D^{\infty}}$ in~\eqref{def:Dinfty}.

Assume that $G$ has exactly one circuit. We claim that each walker $Y_j$, $1\leq j\leq N$, is trapped in the unique circuit $C_*$ of $G$. Indeed, since $Y_j$ is non-backtracking and the graph $G$ is finite, it must close a circuit which is $C_*$. If $Y_j$ would leave $C_*$ then since it is non-backtracking it would need to eventually close a circuit which however would be different from $C_*$. This is impossible by the assumption that $G$ has only one circuit. Note at that point that all balls-in-bins process  that we use to emulate the dynamics of $\vec{Y}$ will have only one bin, since each bin in the construction of the processes $\vec{Y}$ corresponds to an edge whose corresponding crossing number is positive  and therefore the trajectory of $\vec{Y}$ is restricted to a circuit. In particular, the monopolistic and leadership events coincide with the realization of the balls-in-bins processes.

Assume now that $G$ has at least two circuits. Our goal is to describe the graph $\mc{D}^{\infty}$. Our first step is to prove that $\mc{D}^{\infty}$ is a union of circuits. We further specialize this description depending on the regime of $W_1$.

\textit{First step:} $\mc{D}^{\infty}$ is the union of circuits.
 Let $\Gamma$ be the collection of circuits of $\mc{D}^\infty$ that are crossed infinitely often. We also define 
\begin{equation*}
\Gamma_{v}=\{C\in \Gamma\,:\,v\in C\}\,.
\end{equation*} 
\begin{center}
	\textit{Claim:} we have that $\Gamma_{v}\neq \emptyset$ for every $v\in \mc{D}^{\infty}$.
\end{center}
Indeed, if $v\in \mc{D}^{\infty}$ there exists a walker $Y_j$ that visits $v$ infinitely often since the number of walkers are finite. If $Y_j(n)=v$ the first return to $v$ after time $n$ generates a circuit because $Y_j$ is non-backtracking: it uses an outer edge to leave $v$, and uses an inner edge to return to $v$. That is, each visit of $Y_j$ to $v$ corresponds to a complete visit to some circuit containing $v$. Since the number of circuits is finite there exists a circuit $C_v$ with $v\in C_v$ that is visited infinitely often by $Y_j,$ which is enough to conclude the argument. As consequence of the claim, we have that $\mc{D}^{\infty}=\cup_{C\in \Gamma}C.$ 

\textit{Second step:} here we need to assume that $\Sigma(W_{1}^{-1})<\infty.$

Our goal is to prove that the union $\mc{D}^{\infty}=\cup_{C\in \Gamma}C$ {\color{blue} is } disjoint. To do that, it is enough to
prove that $|\Gamma_v|=1$ for any $v\in \mc{D}^{\infty}$. Indeed, let $C,C'\in \Gamma$ with  $C\cap C'\neq \emptyset$. For $v\in C\cap C'$, $|\Gamma_v|=1$ immediately implies that $C=C'.$

Now we prove that $|\Gamma_v|=1$ for a fixed $v\in \mc{D}^{\infty}.$ Let $C_1,\,C_2\in \Gamma$ with $v\in C_1\cap C_2$ and $C_1\neq C_2.$  Then there exist vertices $w_1$ and $w_2$ such that $(v,w_1)$ is an edge of $C_1$ but not of $C_2$ and $(v,w_2)$ is an edge of $C_2$ but not of $C_1$. In particular, $(v,w_1)\in \vec{\mc{E}}^{\infty}$ and $(v,w_2)\in \vec{\mc{E}}^{\infty}$ with $w_1\neq w_2$ (recall that our graph does not have multiple edges).   However, each visit of $\vec{Y}$ to $v$ corresponds to a realization of a balls-in-bins process with reinforcement function $W_1$. Since $\Sigma(W_1^{-1})<\infty$, Theorem~\ref{thm:Rubin} asserts that we are in the monopolistic regime. This is a contradiction with the fact that the bin corresponding to $(v,w_1)$ receives infinitely many balls as well as the bin corresponding to $(v,w_2)$. Hence, $C_1=C_2$.

\appendix

\section{Proof of Proposition~\ref{prop:examples}}
\label{sec:proofexamples}

\begin{center}
 Proof part \rm{(a)}:
\end{center}

 By assumption $\sum_{k\geq 0}W_1(k)^{-1}<\infty$ and $W_2$ is non-increasing. Consider
an element $(\phi_n)_{n\in\bb N}\in\Phi$, and denote by $d$ its dimension, i.e., $\phi_n\in\bb Z^d$ for all $n$. Since $W_2$ is non-increasing one has in particular that $W_2(-k)\leq W(0)$ for all $k\leq 0$. Hence,
\begin{equation}\label{eq:W2nonincreasing}
\Upsilon^{-}(\phi_n)=\sum_{i:\phi(i)\leq 0}W_2(-\phi_n(i))\leq d W_2(0),
\end{equation}  for all $n\in\bb N$. To estimate the positive part, we first note that for any $n$ and $i$ such that $\phi_n(i) >0$ one has that 
\begin{equation*}
\Upsilon^{+}(\phi_n)=\sum_{j:\phi(j)>0} W_1(\phi_n(j)) \geq W_1(\phi_n(i)).
\end{equation*}
Moreover, by the definition of the sequence $(\phi_n)_{n\in\bb N}$ for any $n\geq 0$ there exists $i=i(n)\in\{1,2,\ldots,d\}$  such that $\phi_{n+1}(i) =\phi_{n}(i)+1$. Thus, we see that
\begin{equation}\label{eq:W1summable}
\sum_{n=1}^{\infty} \frac{1}{\Upsilon^{+}(\phi_n)}\leq 
\sum_{n=1}^{\infty} \frac{1}{W_1(\phi_n(i(n)))}.
\end{equation}
To continue, for $k\in\bb N$ we define the quantity $L_k$ via
\begin{equation*}
L_k= \big|\{n\in\bb N:\, \phi_n(i(n))= k\}\big|,
\end{equation*}
and we note that $L_k\leq d$, since the vector $\phi_n$ has $d$ components and one entry is updated at each time enforcing that at most in $d$ instances of time one has a component at level $k$. We therefore can conclude that
\begin{equation*}
\begin{aligned}
\sum_{n=1}^{\infty}\frac{1}{W_1(\phi_n(i(n)))}&=\sum_{n=1}^{\infty}\sum_{k\geq 0}\frac{1}{W_1(\phi_n(i(n)))}\one_{\{\phi_n(i(n))=k\}}\\
&=\sum_{k\geq 0}\frac{L_k}{W_1(k)}\\
& \leq d\sum_{k=1}^{\infty}\frac{1}{W_1(k)} <\infty,
\end{aligned}
\end{equation*}
where the last inequality uses that $\Sigma(W_1^{-1})<\infty$. Combining this with~\eqref{eq:W2nonincreasing} and~\eqref{eq:W1summable} allows us to conclude that \eqref{eq:A2} holds.

\begin{center}
Proof of part \rm{(b)}.
\end{center}

  We show that~\eqref{eq:A2} is satisfied. To that end we denote by $C$ a proportionality constant that might change from occurrence to occurrence.  We consider a function $(\phi_n)_{n\in\bb N}\in\Phi$ as in the first part of the proof. We note that for any index $\alpha >0$, any $j\in\bb N$
for any collection of positive numbers $a_1,\ldots, a_j$ one has that 
\begin{equation}\label{eq:equiv}
\frac{1}{C}\Big(\sum_{i=1}^{j} a_i\Big)^\alpha\leq \sum_{i=1}^{j}a_i^\alpha \leq C \Big(\sum_{i=1}^{j} a_i\Big)^\alpha,
\end{equation}
where $C$ above only depends on $\alpha$ and $j$.
To continue, we define
\begin{equation*}
\mc F_+(\phi_n):=\sum_{\substack{i:\, \phi_n(i)>0}}\phi_n(i),\quad
\mc F_-(\phi_n):=\sum_{\substack{i:\, \phi_n(i)\leq 0}}\phi_n(i),
\end{equation*}
and we note that the second item in Definition~\ref{def:Phi} implies that for any $n$ one has that $\mc F_+(\phi_n)+\mc F_-(\phi_n)\in \llbracket-N,+N\rrbracket$. Therefore,~\eqref{eq:equiv} yields the estimate
\begin{equation}
\frac{\Upsilon^{-}(\phi_n)}{\Upsilon^{+}(\phi_n)}\leq C \frac{(-\mc F_-(\phi_n))^q}{\mc F_+(\phi_n)^p}\leq C \frac{(N+\mc F_+(\phi_n))^q}{\mc F_+(\phi_n)^p} \leq C \frac{1}{\mc F_+(\phi_n)^{p-q}}\,,
\end{equation} 
To finish the proof it only remains to note that the sequence $(\mc F_+(\phi_n))_{n\in\bb N}$ is a sequence of positive numbers satisfying $\mc F_+(\phi_{n+1}) = \mc F_+(\phi_n)+1$ for all $n\in\bb N$.

\begin{center}
Proof of part \rm{(c)}.
\end{center}

First of all note that by the monotonicity of the logarithm, we have that for some constant $C$
\begin{equation*}
\Upsilon^{-}(\phi)\leq C  \log\Big(-\mc F_-(\phi_n)\Big)^q.
\end{equation*}
Moreover, one may show that there exists a universal constant $C$ such that
\begin{equation}
\sum_{i:\, \phi_n(i) >0} W_1(\phi_n(i)) \geq C W_1(\mc F_+(\phi_n)),
\end{equation}
and we may conclude as in part~\rm{(b)}.

\begin{center}
Proof of part \rm{(d)}.
\end{center}

The assumptions $W_1\in \mc{P}_{\geq}$, $W_2\in \mc{P}_{\leq}$, and $W_2$ satisfies \eqref{eq:extraW2} guarantees that the proof of part~\rm{(d)} is exactly the same as in part~\rm{(b)}.

\begin{center}
Proof of part \rm{(e)}.
\end{center}

Consider again an element $(\phi_n)_{n\in\bb N}\in\Phi$, and denote by $d$ its dimension, i.e., $\phi_n\in\bb Z^d$ for all $n$. Recall that $W_2(k)\leq e^{\alpha k}$ for all $k\in \bb N$, and note that $k\mapsto e^{\alpha k}$ is increasing. Then we have, for all $n\in\bb N$, that
\begin{equation*}
\begin{split}
\Upsilon^{-}(\phi_n)&=\sum_{i:\phi(i)\leq 0}W_2(-\phi_n(i))\\
&\leq \sum_{i:\phi(i)\leq 0}\exp(\alpha(-\phi_n(i)))\\
&\leq d\exp\bigg(\alpha\sum_{i:\phi(i)\leq 0}(-\phi_n(i))\bigg)\,.
\end{split}
\end{equation*} 

Using that $W_1(k)=e^{\beta k}$ is convex, we obtain that
\begin{equation}
\begin{split}
\Upsilon^{+}(\phi_n)&=\sum_{i:\phi(i)> 0}W_1(\phi_n(i))\\
&= \sum_{i:\phi(i)> 0}\exp(\beta\phi_n(i))\\
&\geq d\exp\bigg(\frac{\beta}{d}\sum_{i:\phi(i)> 0}\phi_n(i)\bigg)\,.
\end{split}
\end{equation}  

Therefore, as in part \rm{(b)} we obtain
\begin{equation}\label{eq:finaleq}
\frac{\Upsilon^{-}(\phi_n)}{\Upsilon^{+}(\phi_n)}\leq  \frac{\exp(\alpha(N+\mc{F}_+(\phi_n)))}{\exp\big(\frac{\beta}{d}(\mc{F}_+(\phi_n))\big)}
\end{equation}

Since $N$ is fixed and $\mc{F}_+(\phi_n)\to \infty$, as $n\to \infty$, the sum of~\eqref{eq:finaleq} over $n$ is finite if $\beta>d\alpha$. We are done because $\Delta(G)\geq d$.

\section*{Acknowledgements}
 D.~E. gratefully acknowledges financial support 
from the National Council for Scientific and Technological Development - CNPq via a 
Universal grant 409259/2018-7, and a Bolsa de Produtividade 303520/2019-1. D.~E. moreover acknowledges support by the Serrapilheira Institute which supported this work (grant number Serra - R-2011-37582). G.~R. was supported by a Capes/PNPD fellowship 888887.313738/2019-00. The authors are grateful to T. Franco and A. Teixeira for fruitful discussions about the topic of the paper. The authors are also grateful to the anonymous referees for their valuable input.

\bibliographystyle{plain}
\bibliography{bibliografia}

\begin{thebibliography}{10}

\bibitem{Angel_2007}
A~G Angel, M~R Evans, E~Levine, and D~Mukamel.
\newblock Criticality and condensation in a non-conserving zero-range process.
\newblock {\em Journal of Statistical Mechanics: Theory and Experiment},
  2007(08):P08017--P08017, 2007.

\bibitem{Arnold1981}
Ludwig Arnold.
\newblock Mathematical models of chemical reactions.
\newblock In Michiel Hazewinkel and Jan~C. Willems, editors, {\em Stochastic
  Systems: The Mathematics of Filtering and Identification and Applications},
  pages 111--134, Dordrecht, 1981. Springer Netherlands.

\bibitem{Blount91}
Douglas Blount.
\newblock Comparison of stochastic and deterministic models of a linear
  chemical reaction with diffusion.
\newblock {\em Ann. Probab.}, 19(4):1440--1462, 1991.

\bibitem{Blount92}
Douglas Blount.
\newblock Law of large numbers in the supremum norm for a chemical reaction
  with diffusion.
\newblock {\em Ann. Appl. Probab.}, 2(1):131--141, 1992.

\bibitem{cotar2017}
Codina Cotar and Debleena Thacker.
\newblock Edge- and vertex-reinforced random walks with super-linear
  reinforcement on infinite graphs.
\newblock {\em Ann. Probab.}, 45(4):2655--2706, 07 2017.

\bibitem{EFR2019}
Dirk {Erhard}, Tertuliano {Franco}, and Guilherme {Reis}.
\newblock {The Directed Edge Reinforced Random Walk: The Ant Mill Phenomenon}.
\newblock \url{https://arxiv.org/abs/1911.07295}, 2019.

\bibitem{Franco2012}
Tertuliano Franco and Pablo Groisman.
\newblock A particle system with explosions: Law of large numbers for the
  density of particles and the blow-up time.
\newblock {\em Journal of Statistical Physics}, 149(4):629--642, 2012.

\bibitem{Kious2020}
Daniel {Kious}, C{\'e}cile {Mailler}, and Bruno {Schapira}.
\newblock {Finding geodesics on graphs using reinforcement learning}.
\newblock {\em arXiv e-prints}, October 2020.

\bibitem{LimicTarres2007}
V.~Limic and P.~Tarr\`es.
\newblock Attracting edge and strongly edge reinforced walks.
\newblock {\em Ann. Probab.}, 35(5):1783--1806, 2007.

\bibitem{Limic2003}
Vlada Limic.
\newblock {Attracting edge property for a class of reinforced random walks}.
\newblock {\em The Annals of Probability}, 31(3):1615 -- 1654, 2003.

\bibitem{ma_xia_yang_2017}
Tianren Ma, Zhengyou Xia, and Fan Yang.
\newblock An ant colony random walk algorithm for overlapping community
  detection.
\newblock {\em Lecture Notes in Computer Science Intelligent Data Engineering
  and Automated Learning--IDEAL 2017}, pages 20 –-- 26, 2017.

\bibitem{merkl}
Franz Merkl and Silke W.~W. Rolles.
\newblock {\em Linearly edge-reinforced random walks}, volume Volume 48 of {\em
  Lecture Notes--Monograph Series}, pages 66--77.
\newblock Institute of Mathematical Statistics, Beachwood, Ohio, USA, 2006.

\bibitem{2005Oliveira}
Roberto Oliveira.
\newblock Balls-in-bins processes with feedback and {B}rownian motion.
\newblock {\em Combin. Probab. Comput.}, 17(1):87--110, 2008.

\bibitem{Pemantle1988}
R.~Pemantle.
\newblock Phase transition in reinforced random walk and {RWRE} on trees.
\newblock {\em Ann. Probab.}, 16(3):1229--1241, 1988.

\bibitem{pemantle2007}
Robin Pemantle.
\newblock A survey of random processes with reinforcement.
\newblock {\em Probab. Surveys}, 4:1--79, 2007.

\bibitem{Quittner2007}
Pavol Quittner and Philippe Souplet.
\newblock {\em Superlinear parabolic problems}.
\newblock Birkh\"{a}user Advanced Texts: Basler Lehrb\"{u}cher. [Birkh\"{a}user
  Advanced Texts: Basel Textbooks]. Birkh\"{a}user Verlag, Basel, 2007.
\newblock Blow-up, global existence and steady states.

\end{thebibliography}

\end{document}